\newcounter{rmnum}
\newenvironment{romannum}{\begin{list}{{\upshape (\roman{rmnum})}}{\usecounter{rmnum}
\setlength{\leftmargin}{14pt}
\setlength{\rightmargin}{8pt}
\setlength{\itemsep}{2pt}
\setlength{\itemindent}{-1pt}
}}{\end{list}}
\def\Re{\mathbb{R}}
\def\K{{\sf K}}
\def\argmin{\mathop{\text{\rm arg\,min}}}
\def\ind{\text{\rm\large 1}}
\def\varble{\,\cdot\,}
\def\varble{\,\cdot\,}
\def\Fig#1{Fig.~\ref{#1}}
\def\notes#1{\marginpar{\tiny #1}\typeout{Notes!
Notes!
Notes!
}}
\renewcommand{\notes}[1]{\typeout{notes!}}
\def\ind{\bbbone}
\def\Re{\field{R}}
\def\k{{\sf K}}
\def\clZ{{\cal Z}}
\def\E{{\sf E}}
\def\Expect{{\sf E}}
\def\Prob{{\sf P}}
\def\Expect{{\sf E}}
\def\Fig#1{Fig.~\ref{#1}}
\newcommand{\expect}{ {\sf E} }
\def\clZ{{\cal Z}}
\def\varble{\,\cdot\,}
\newtheorem{theorem}{Theorem}
\newtheorem{example}{Example}
\newtheorem{remark}{Remark}
\def\beq{\begin{eqnarray}} 
\def\bc{\begin{center}} 
\def\be{\begin{enumerate}}
\def\bi{\begin{itemize}} 
\def\bs{\begin{small}}
\def\bS{\begin{slide}}
\def\ec{\end{center}} 
\def\ee{\end{enumerate}}
\def\ei{\end{itemize}}
\def\es{\end{small}}
\def\eS{\end{slide}}
\def\eeq{\end{eqnarray}}
\def\hah{\hat{h}}
\def\hah{\hat{h}}
\newcommand{\ud}{\,\mathrm{d}}
\def\Re{\mathbb{R}}
\def\E{{\sf E}}
\def\argmin{\mathop{\text{\rm arg\,min}}}
\def\ind{\text{\rm\large 1}}
\def\varble{\,\cdot\,}
\def\Expect{{\sf E}}
\def\clZ{{\cal Z}}
\def\hah{\hat{h}}
\renewcommand{\Re}{\mathbb{R}}
\def\Prob{{\sf P}}
\newcommand{\X}{X}
\newcommand{\kepsN}{{k}_{\epsilon}^{(N)}}
\newcommand{\nepsN}{{n}_{\epsilon}^{(N)}}
\newcommand{\phiepsN}{{\phi}^{(N)}_\epsilon}
\newcommand{\phiM}{\phi^{(M)}}
\newcommand{\TepsN}{{T}^{(N)}_\epsilon}
\newcommand{\geps}{{g}_{\epsilon}}
\newcommand{\pr}{\rho}
\title{Kalman Filter and its Modern Extensions for the
  Continuous-time Nonlinear Filtering Problem}
\author{Amirhossein Taghvaei, 
    \affiliation{
	Department of Mechanical Science and Engineering\\
	University of Illinois at Urbana-Champaign\\
	Email: taghvae2@illinois.edu
    }	
}
\author{Jana de Wiljes,
    \affiliation{
    Institut f\"ur Mathematik \\ 
    Universit\"at Potsdam\\
    Email: wiljes@uni-potsdam.de
    }	
}
\author{Prashant G. Mehta, 
    \affiliation{
	Department of Mechanical Science and Engineering\\
	University of Illinois at Urbana-Champaign\\
	Email: mehtapg@illinois.edu
    }	
}
\author{Sebastian Reich, 
    \affiliation{
	Institut f\"ur Mathematik\\ 
	Universit\"at Potsdam and\\
	Department of Mathematics and Statistics \\
	University of Reading\\
	Email: sereich@uni-potsdam.de
    }	
}
\begin{document}
\normalem
\maketitle
\begin{abstract}
This paper is concerned with the filtering problem in
continuous-time.  Three algorithmic solution approaches for this
problem are reviewed:  (i) the classical Kalman-Bucy filter which provides
an exact solution for the linear Gaussian problem, (ii) the ensemble
Kalman-Bucy filter (EnKBF) which is an approximate filter and represents an
extension of the Kalman-Bucy filter to nonlinear problems, and (iii) the
feedback particle filter (FPF) which represents an extension of the EnKBF and
furthermore provides for an consistent solution in the
general nonlinear, non-Gaussian case.  The common feature of the three
algorithms is the gain times error formula to implement the
update step (to account for conditioning due to the observations) in the filter.  
In contrast to the commonly used sequential Monte Carlo methods, the EnKBF and FPF avoid the resampling of the 
particles in the importance sampling update step.  Moreover, the
feedback control structure
provides for error correction potentially leading to smaller
simulation variance and improved stability properties.  The paper
also discusses the issue of non-uniqueness of the filter update
formula and  formulates a novel approximation algorithm based on ideas
from optimal transport and coupling of measures.  Performance of this
and other algorithms is illustrated for a numerical example.

\end{abstract}

\section{Introduction}
\label{sec:intro}

\begin{table*}[t]
\centering
\begin{tabular}{|c|c|c|}
\hline
KBF & Kalman-Bucy Filter & Equations~\eqref{eq:mean_KF}-\eqref{eq:var_KF}\\ \hline
EKBF & Extended Kalman-Bucy Filter & Equations~\eqref{eq:mean-EKF}-\eqref{eq:var-EKF} \\ \hline
\multirow{2}{*}{EnKBF} & (Stochastic) Ensemble Kalman-Bucy Filter & Equation~\eqref{Stochastic EnKBF} \\ & (Deterministic) Ensemble Kalman-Bucy Filter & Equation~\eqref{detEnKBF}\\ \hline
FPF & Feedback Particle Filter & Equation~\eqref{eqn:particle_filter_nonlin_intro}
\\\hline
\end{tabular}
\caption{Nomenclature for the continuous-time filtering algorithms}  
\label{tab:nomen}
\end{table*}

Since the pioneering work of Kalman from the 1960s, sequential state
estimation has been extended to application areas far beyond its
original aims such as numerical weather prediction \cite{sr:kalnay}
and oil reservoir exploration (history matching)
\cite{sr:Oliver2008}. These developments have been made possible by
clever combination of Monte Carlo techniques with Kalman-like
techniques for assimilating observations into the underlying dynamical
models.  The most prominent of these algorithms are the ensemble
Kalman filter (EnKF), the randomized maximum likelihood (RML) method
and the unscented Kalman filter (UKF) invented independently by
several research groups
\cite{sr:kitanidis95,jdw:BurgersLeeuwenEvensen1998,sr:houtekamer01,sr:Julier97anew}
in the 1990s.  The EnKF in particular can be viewed as a cleverly
designed random dynamical system of interacting particles which is
able to approximate the exact solution with a relative small number of
particles. This interacting particle perspective has led to many new
filter algorithms in recent years which go beyond the inherent
Gaussian approximation of an EnKF  during the data assimilation
(update) step
\cite{jdw:ReichCotter2015}.

In this paper, we review the interacting particle perspective in the
context of the continuous-time filtering problems and demonstrate its
close relation to Kalman's and Bucy's original feedback control structure of the
data assimilation step.  More specifically, we highlight the feedback 
control structure of three classes of algorithms  for approximating
the posterior distribution: (i) the classical Kalman-Bucy filter which
provides an exact solution for the linear Gaussian problem, (ii) the
ensemble Kalman-Bucy filter (EnKBF) which is an approximate filter and
represents an extension of the Kalman-Bucy filter to nonlinear
problems, and (iii) the feedback particle filter (FPF) which
represents an extension of the EnKBF and furthermore provides for a
consistent solution 
of the general nonlinear, non-Gaussian problem.

A closely related goal is to provide comparison between these
algorithms.  A common feature of the three algorithms is the gain
times error formula to implement the update step in the filter.   
The
difference is that while the Kalman-Bucy filter is an exact algorithm, the
two particle-based algorithms are approximate with error decreasing to
zero as the number of particles $N$ increases to infinity.
Algorithms with this property are said to be consistent.    

In the class of interacting particle algorithms discussed, the FPF
represents the most general solution to the nonlinear non-Gaussian
filtering problem.  The challenge with implementing the FPF lie in
approximation of the ``gain function'' used in the update step.  The gain function
equals the Kalman gain in the linear Gaussian setting and must be
numerically approximated in the general setting.  One particular
closed-form approximation is the 
constant gain approximation.  In this case, the FPF is shown to reduce
to the EnKBF algorithm.  The  EnKBF naturally extends to nonlinear
dynamical systems and its discrete-time versions have  become very
popular in recent years with applications to, for example,
atmosphere-ocean dynamics and oil reservoir exploration.  In
the discrete-time setting, development and application of closely related
particle flow algorithms has also been a subject of recent
interest, e.g.,~\cite{daum10,daum2017generalized,ColemanPosterior,MarzoukBayesian,2015arXiv150908787H,yang_discrete}.

The outline of the remainder of this paper is as follows: The continuous-time 
filtering problem and the classic Kalman-Bucy filter are summarized in Sections 
\ref{sec:problem} and \ref{sec:KBF}, respectively. The Kalman-Bucy filter is then
put into the context of interacting particle systems in the form of
the EnKBF in Section \ref{sec:EnKF}.
Section \ref{sec:FPF} together with Appendices \ref{sec:theory} and \ref{sec:lp_justification} provides
a consistent definition of the FPF and a discussion of alternative
approximation techniques which lead to consistent approximations to
the filtering problem as the number of particles, $N$, goes to
infinity.  It is shown that the EnKBF can be viewed
as an approximation to the FPF.  Four
algorithmic approaches to gain function approximation are described
and their relationship discussed. The performance of the four algorithms is numerically studied and compared for an example problem in Section~\ref{sec:numerics}. The paper concludes with discussion
of some open problems in Section~\ref{sec:conc}.  The nomenclature for
the filtering algorithms described in this paper appears in
Table~\ref{tab:nomen}.

\section{Problem Statement}
\label{sec:problem}

In the continuous-time setting, the model for nonlinear filtering problem is
described by the nonlinear stochastic differential equations (sdes):
\begin{subequations}
\begin{align}
\text{Signal:} \quad \quad 
\ud X_t &= a(X_t) \ud t + \sigma(X_t) \ud B_t,\quad\quad X_0 \sim p_0^*
\label{eqn:Signal_Process}
\\
\text{Observation:} \quad \quad \ud Z_t &= h(X_t)\ud t + \ud W_t
\label{eqn:Obs_Process}
\end{align}
\end{subequations}
where $X_t\in\Re^d$ is the (hidden) state at time $t$, the initial
condition $X_0$ is sampled from a given prior density $p_0^*$, $Z_t \in\Re^m$ is the
observation or the measurement vector, and $\{B_t\}$, $\{W_t\}$ are two mutually
independent Wiener processes taking values in $\Re^d$ and $\Re^m$. The
mappings $a(\cdot): \Re^d \rightarrow \Re^d$, $h(\cdot): \Re^d
\rightarrow \Re^m$ and $\sigma(\cdot):\Re^d \rightarrow \Re^{d \times
  d}$ are known $C^1$ functions.  
The covariance matrix of the observation
noise $\{W_t\}$ is assumed to be positive definite.
The function $h$ is a column vector whose $j$-th coordinate is denoted as $h_j$ (i.e.,
$h=(h_1,h_2,\hdots,h_m)^{\rm T}$).  
By scaling, it is assumed without loss of generality that the covariance
matrices associated with $\{B_t\}$, $\{W_t\}$ are identity matrices.
Unless otherwise noted, the stochastic differential equations (sde)
are expressed in It\^{o} form.  Table~\ref{tab:symbols-filter}
includes a list of symbols used in the continuous-time filtering
problem.

In applications, the continuous time filtering models are
often expressed as:
\begin{subequations}
\begin{align}
\frac{\ud X_t}{\ud t}  &= a(X_t)  + \sigma(X_t) \dot{B}_t
\label{eqn:Signal_Process_1}
\\
Y_t & := \frac{\ud Z_t}{\ud t} = h(X_t) + \dot{W}_t
\label{eqn:Obs_Process_1}
\end{align}
\end{subequations}
where $\dot{B}_t$ and $\dot{W}_t$ are mutually independent white noise
processes (Gaussian noise) and $Y_t \in\Re^m$ is the vector valued
observation at time $t$.  The sde-based model is preferred here
because of its mathematical rigor.  Any sde involving $Z_t$ is
converted into an ODE involving $Y_t$ by formally dividing the sde by
$\ud t$ and replacing $\frac{\ud Z_t}{\ud t}$ by $Y_t$ (See also
Remark~\ref{rem:remStrato}). 

\begin{table}[t]
\centering
\begin{tabular}{|c|c|c|}
\hline
Variable & Notation & Model\\ \hline
State & $X_t$ & Eq.~\eqref{eqn:Signal_Process} \\ 
Process noise  & $B_t$ & Wiener process \\ 
Measurement & $Z_t$ & Eq.~\eqref{eqn:Obs_Process}\\
Measurement noise & $W_t$ & Wiener process
\\\hline
\end{tabular}
\caption{Symbols for the continuous-time filtering problem}  
\label{tab:symbols-filter}
\end{table}

\medskip

The objective of filtering is to estimate the
posterior distribution of $X_t$ given the time history of observations
$\clZ_t :=
\sigma(Z_s:  0\le s \le t)$. The density of the posterior
distribution is denoted by $p^*$, so
that for any measurable set $A\subset \Re^d$,
\begin{equation}
\int_{x\in A} p^*(x,t)\, \ud x   = \Prob\{ X_t \in A\mid \clZ_t \}
\nonumber
\end{equation}

\medskip

One example of particular interest is when the mappings $a(x)$ and
$h(x)$ are linear, $\sigma(x)$ is a constant matrix that does not
depend upon $x$, and
the prior density $p_0^*$ is Gaussian.  The associated problem is
referred to as the linear Gaussian filtering problem. For this
problem, the posterior density is known to be Gaussian.  The resulting
filter is said to be finite-dimensional because the posterior is
completely described by finitely many statistics -- conditional mean
and variance in the linear Gaussian case. 

\medskip

For the general nonlinear non-Gaussian case, however, the filter is
infinite-dimensional because it defines the evolution, in  the space of
probability measures, of $\{p^*(\varble ,t) : t\ge 0\}$.   
The particle filter is a simulation-based algorithm to approximate the
posterior: The key step is the
construction of $N$ interacting stochastic processes $\{X^i_t : 1\le i \le N\}$:
The value $X^i_t \in \Re^d$ is the state for the $i$-th
particle at time $t$. For each time $t$, the empirical distribution
formed by the particle population is used to approximate the
posterior distribution.  Recall that this is  defined for any
measurable set $A\subset\Re^d$ by,
\begin{equation} \label{eqn:empirical}
p^{(N)}(A,t) = \frac{1}{N}\sum_{i=1}^N \ind\{ X^i_t\in A\} \,.\nonumber
\end{equation}
where $\ind\{x \in A\}$ is the indicator function (equal to $1$ if
$x\in A$ and $0$ otherwise).  The first interacting particle representation of the continuous-time filtering
problem can be found in \cite{crisan2007,crisan10}. The close connection of such interacting
particle formulations to the gain factor and innovation structure of the classic Kalman filter
has been made explicit starting with \cite{taoyang_cdc11,taoyang_TAC12} and has led to the FPF formulation 
considered in this paper.

\medskip

\noindent \textbf{Notation:} The density for a Gaussian random
variable with mean $m$ and variance $\Sigma$ is denoted as ${\cal
  N}(m,\Sigma)$.  For vectors $x,y\in\Re^d$, the dot product is
denoted as $x\cdot y$ and $|x|:=\sqrt{x\cdot x}$; $x^{\rm T}$ denotes
the transpose of the vector.  Similarly, for a matrix $\K$, $\K^{\rm
  T}$ denotes the matrix transpose. For two sequence
$\{a_n\}_{n=1}^\infty$ and $\{b_n\}_{n=1}^\infty$, the big $O$
notation $a_n=O(b_n)$ means $\exists \, n_0 \in \mathbb{N}$ and $c>0$ such that $|a_n| \leq c|b_n|$ for $n>n_0$.

\section{Kalman-Bucy Filter}
\label{sec:KBF}

\begin{table}[t]
\centering
\begin{tabular}{|c|c|c|}
\hline
Variable & Notn. \& Defn. & Model\\ \hline
Cond. mean & $\hat{X}_t=\Expect[X_t|\clZ_t]$ & Eq.~\eqref{eq:mean_KF} \\ 
Cond. var.  & $\Sigma_t=\expect[(X_t-\hat{X}_t) (X_t-\hat{X}_t)^T|\clZ_t]$ & Eq.~\eqref{eq:var_KF} \\ 
Kalman gain & $\K_t=\Sigma_tH_t^T$ & Eq.~\eqref{eq:Kalman_gain} 
\\\hline
\end{tabular}
\caption{Symbols for the Kalman filter}  
\label{tab:symbols-KF}
\end{table}

Consider the linear Gaussian problem: The mappings $a(x) = A\,x$ and
$h(x) = H\,x$ where $A$ and $H$ are $d \times d$ and $m \times d$
matrices; the process noise covariance $\sigma(x) = \sigma$, a
constant $d \times d$ matrix; and the prior density is Gaussian,
denoted as ${\cal N}(\hat{X}_0,\Sigma_0)$.

For this problem, the posterior density is known to be Gaussian,
denoted as ${\cal N}(\hat{X}_t,\Sigma_t)$, where $\hat{X}_t$ and
$\Sigma_t$ are the conditional mean and variance, i.e
$\hat{X}_t:=\expect[X_t|\clZ_t]$ and $\Sigma_t:=\expect[(X_t-\hat{X}_t) (X_t-\hat{X}_t)^T|\clZ_t]$.  Their evolution is
described by the finite-dimensional Kalman-Bucy filter:
\begin{subequations}
\begin{align}
 \ud \hat{X}_t  &= A \hat{X}_t \ud t + {\sf K}_t 
\Bigl(\ud
Z_t- H_t \hat{X}_t \ud t \Bigr)
\label{eq:mean_KF}
\\[.1cm]
\frac{\ud \Sigma_t}{\ud t}
&= A \Sigma_t + \Sigma_t A^T + \sigma \sigma^T - \Sigma_t H^T
H \Sigma_t
\label{eq:var_KF}
\end{align}
\end{subequations}
where 
\begin{equation}
{\sf K}_t:=\Sigma_t H_t^{\rm T}
\label{eq:Kalman_gain}
\end{equation}
 is referred to as the Kalman gain
and the filter is initialized with the initial conditions
$\hat{X}_0$ and $\Sigma_0$ of the prior density. Table~\ref{tab:symbols-KF}
includes a list of symbols used for the Kalman filter. 

The evolution equation for the mean is a sde because of the presence
of stochastic forcing term $Z_t$ on the right-hand side.  The
evolution equation for the variance $\Sigma_t$ is an ode that does not
depend upon the observation process.  

The Kalman filter is one of the most widely used algorithm in engineering.  Although the filter describes the posterior {\em only}
in linear Gaussian settings, it is often used as an approximate
algorithm even in more general settings, e.g., by defining the
matrices $A$ and $H$ according to the Jacobians of the
mappings $a$ and $h$:
\[
A:= \frac{\partial a}{\partial x} (\hat{X}_t), \quad H:= \frac{\partial h}{\partial x} (\hat{X}_t) 
\] 
The resulting algorithm is referred to as the extended Kalman
filter:
\begin{subequations}
\begin{align}
 \ud \hat{X}_t  &= a(\hat{X}_t) \ud t + {\sf K}_t 
\Bigl(\ud
Z_t-  h(\hat{X}_t) \ud t \Bigr)
\label{eq:mean-EKF}
\\[.1cm]
\frac{\ud \Sigma_t}{\ud t}
&= A \Sigma_t + \Sigma_t A^T + \sigma (\hat{X}_t) \sigma^T(\hat{X}_t) - \Sigma_t H^T
H \Sigma_t
\label{eq:var-EKF}
\end{align}
\end{subequations}
where ${\sf K}_t=\Sigma_t H^{\rm T}$ is used as the formula for the gain.

The Kalman filter and its extensions are recursive algorithms that
process measurements in a sequential (online) fashion.  At each time
$t$, the filter computes an error $\ud Z_t-  H \hat{X}_t \ud t$
(called the {\em innovation error}) which reflects the
new information contained in the most recent measurement.  The filter
state $\hat{X}_t$ is corrected at each time step via a
$(\text{gain} \times \text{error})$ update formula.  

The error correction feedback structure (see
Fig.~\ref{fig:fig_FPF_KF}) is important on account of robustness. A filter is based on an idealized model of
an underlying stochastic dynamic process.  The self-correcting
property of the feedback provides robustness, allowing one to tolerate
a degree of uncertainty inherent in any model.

The simple intuitive nature of the update formula is invaluable in
design, testing and operation of the filter.  For example, the Kalman
gain is proportional to $H$ which scales with the signal-to-noise
ratio of the measurement model.  In practice, the gain may be `tuned'
to optimize the filter performance.  To minimize online
computations, an offline solution of the algebraic Ricatti equation
(obtained after equating the right-hand side of the variance
ode~\eqref{eq:var_KF} to zero) may be used to obtain a constant value for the
gain.          

The basic Kalman filter has also been extended to handle filtering
problems involving additional uncertainties in the signal model and
the observation model.  The resulting (approximate) algorithms are
referred to as the interacting multiple model (IMM) filter~\cite{Blom_cdc12} and the
probabilistic data association (PDA) filter~\cite{Bar-Shalom_IEEE_CSM}, respectively.  In the
PDA filter, the gain varies based on an estimate of the instantaneous
uncertainty in the measurements.  In the IMM filter, multiple Kalman
filters are run in parallel and their outputs combined to obtain an
estimate.

One explanation of the feedback control structure of the Kalman filter
is based on duality between estimation and control~\cite{kalman1960}.
Although limited to linear Gaussian problems, these considerations
also help explain the differential Ricatti equation structure for the
variance ode~\eqref{eq:var_KF}.

\medskip

Although widely used, the extended Kalman filter can suffer
from stability issues because of the very crude
approximation of the nonlinear model.  The observed divergence arises
on account of two inter-related reasons: (i) Even with Gaussian
process and measurement noise, the nonlinearity of the mappings
$a(\cdot)$, $\sigma(\cdot)$ and $h(\cdot)$ can lead to non-Gaussian
forms of the posterior density $p^*$; and (ii) the Jacobians $A$ and
$H$ used in propagating the covariance can lead to large errors in
approximation of the gain particularly if the Hessian of these
mappings is large.  These issues have necessitated development of
particle based algorithms described in the following sections.

\begin{table}[t]
\centering
\begin{tabular}{|c|c|c|}
\hline
Variable & Notation & Model\\ \hline
\multirow{2}{*}{Particle state} & \multirow{2}{*}{$X^i_t$} &
Stoch. EnKBF Eq.~\eqref{Stochastic EnKBF} \\ 
& & Deter. EnKBF Eq.~\eqref{detEnKBF} \\ \hline
Empirical variance  & $\Sigma_t^{(N)}$ & Eq.~\eqref{empcov} \\ \hline
Particle process noise & $B^i_t$ & Wiener process \\\hline
Particle meas. noise  & $W^i_t$ & Wiener process 
\\\hline
\end{tabular}
\caption{Symbols for the ensemble Kalman-Bucy filter}  
\label{tab:symbols-EnKF}
\end{table}

\section{Ensemble Kalman-Bucy Filter}
\label{sec:EnKF}

For pedagogical reasons, the ensemble Kalman-Bucy filter (EnKBF) is best
described for the linear Gaussian problem -- also the approach taken
in this section.  The extension to the nonlinear non-Gaussian problem
is then immediate, similar to the extension from the Kalman filter to
the extended Kalman filter.

Even in linear Gaussian settings, a particle filter may be a
computationally efficient option for problems with very large state
dimension $d$ (e.g., weather models in meteorology).  For large $d$,
the computational bottleneck in simulating a Kalman filter arises due
to propagation of the covariance matrix according to the differential 
Riccati equation~\eqref{eq:var_KF}.  This computation scales as $O(d^2)$ in
memory.  In an EnKBF implementation, one replaces the exact propagation
of the covariance matrix by an empirical approximation with $N$ particles
\begin{align}
  \qquad \Sigma_t^{(N)} &= \frac{1}{N-1} \sum_{i=1}^N (X_t^i - \hat X_t^{(N)})
(X_t^i- \hat X_t^{(N)})^{\rm T} \label{empcov}
\end{align}
This computation scales as $O(Nd)$. The same reduction in computational cost can be achieved 
by a reduced rank Kalman filter. However, the connection to empirical measures (\ref{eqn:empirical})
is crucial to the application of the EnKBF to nonlinear dynamical systems.

The EnKF algorithm was first developed in a discrete-time
setting~\cite{jdw:BurgersLeeuwenEvensen1998}. Since then various formulations of the
EnKF have been proposed
\cite{jdw:LawStuartZygalakis2015,jdw:ReichCotter2015}. Below we state
two continuous-time formulations of the EnKBF.  Table~\ref{tab:symbols-EnKF}
includes a list of symbols used for these formulations.

\subsection{Stochastic EnKBF}

The conceptual idea of the stochastic EnKBF algorithm is to introduce a
zero mean perturbation (noise term) in the innovation error to achieve
consistency for the variance update.  In the continuous-time stochastic EnKBF
algorithm, the particles evolve according to
\begin{equation} \label{Stochastic EnKBF}
\ud X_t^i = A X_t^i\ud t + \sigma \ud B_t^i + \Sigma^{(N)}_tH^{{\rm T}}
\Big( \ud {Z}_t^i -HX^i_t\ud t + \ud W_t^i\Big)
\end{equation}
for $i=1,\ldots,N$, where $X_t^i\in\Re^d$ is the state of the
$i^\text{th}$ particle at time $t$, the initial condition $X^i_0\sim
p_0^*$, $B^i_t$ is a standard Wiener
process, and $W_t^i$ is a standard Wiener process assumed to be independent
of $X_0^i,\,B_t^i,\,X_t,\,Z_t$ ~\cite{jdw:LawStuartZygalakis2015}.  The variance $\Sigma^{(N)}_t$ is obtained
empirically using~\eqref{empcov}.  Note that the $N$ particles only interact through the common covariance matrix $\Sigma_t^{(N)}$. 

The idea of introducing a noise process first appeared for the
discrete-time EnKF.  The derivation of the continuous-time stochastic
EnKBF can be found in~\cite{jdw:LawStuartZygalakis2015}
or~\cite{jdw:Reich2011}.  It is based on a limiting argument whereby
the discrete-time update step is formally viewed as an Euler-Maruyama
discretization of a stochastic SDE.  For the linear Gaussian problem,
the stochastic EnKBF algorithm is consistent in the limit as
$N\rightarrow\infty$.  This means that the conditional distribution of
$X_t^i$ is Gaussian whose mean and variance evolve according the
Kalman filter, equations~\eqref{eq:mean_KF} and~\eqref{eq:var_KF},
respectively. 

The update formula used in~\eqref{Stochastic EnKBF} is not unique.
A deterministic analogue is described next.

\subsection{Deterministic EnKBF}
A deterministic variant of the EnKBF (first proposed
in \cite{jdw:BergemannReich2012}) is given by:
\begin{equation} \label{detEnKBF}
\ud X_t^i = A X_t^i \ud t + \sigma \ud B_t^i 
+ \Sigma_t^{(N)}H^{\rm T} \left( \ud Z_t - \frac{HX_t^i  + H\hat X_t^{(N)}}{2}
{\rm d}t \right)
\end{equation}
for $i=1,\ldots,N$.
A proof of the consistency of this deterministic variant of the EnKBF
for linear systems can be found in
\cite{jdw:deWiljesReichStannat2016}. There are close parallels between
the deterministic EnKBF and the FPF which is explored further in Section
\ref{sec:FPF}.  In the deterministic formulation of the EnKBF the interaction between the $N$ particles arises through 
the covariance matrix $\Sigma^{(N)}_t$ and the mean $\hat X_t^{(N)}$.

Although the stochastic and the deterministic EnKBF algorithms are
consistent for the linear Gaussian problem, they can be easily
extended to the nonlinear non-Gaussian settings.  However, the resulting
algorithm will in general not be consistent.

\subsection{Well-posedness and accuracy of the EnKBF}
Recent research on EnKF has focussed on the long term behavior and
accuracy of filters applicable for nonlinear data assimilation
\cite{sr:DelMoral16,jdw:deWiljesReichStannat2016,jdw:KellyLawStuart2014,jdw:KellyStuart2016}. In
particular the mathematical justification for the feasibility of the
EnKF and its continuous counterpart in the small ensemble limit are of
interest. These studies of the accuracy for a finite ensemble are of
exceptional importance due to the fact that a large number of ensemble
members is not an option from a computational point of view in many
applicational areas. The authors of \cite{jdw:KellyLawStuart2014}
show mean-squared asymptotic accuracy in the large-time
limit where a particular type of variance inflation is deployed for the
stochastic EnKBF. Well-posedness of the discrete and
continuous formulation of the EnKF is also shown. Similar results
concerning the well-posedness and accuracy for the deterministic
variant~\eqref{detEnKBF} of the EnKBF are derived in
\cite{jdw:deWiljesReichStannat2016}.
A fully observed system is assumed in deriving these accuracy and
well-posedness results.  An investigation of the
well-posedness for partially observed systems is particularly relevant
as the update step in such cases can cause a divergence of the
estimate in the sense that the signal is lost or the values of the
estimate reach machine infinity \cite{jdw:KellyMajdaTong2015}.

\section{Feedback Particle Filter}
\label{sec:FPF}

The FPF is a controlled sde:
\begin{equation}
\begin{aligned}
\ud X^i_t = &a(X^i_t) \ud t + \sigma(X_t^i) \ud B^i_t \\&+ \underbrace{\k_t(X^i_t) \circ (\ud Z_t -
\frac{h(X^i_t) + \hat{h}_t}{2}\ud t)}_{\text{update}},
\quad X_0^i \sim p_0^*
\end{aligned}
\label{eqn:particle_filter_nonlin_intro}
\end{equation}
where (similar to EnKBF) $X_t^i\in\Re^d$ is the state of the
$i^\text{th}$ particle at time $t$, the initial condition $X^i_0\sim
p_0^*$, $B^i_t$ is a standard Wiener process, and $\hat{h}_t :=
\E[h(X_t^i)|\mathcal{Z}_t]$.  Both  $B^i_t$ and $X^i_0$ are mutually
independent and also independent of $X_t,Z_t$.  The $\circ$ in the
update term indicates
that the sde is expressed in its Stratonovich form.

The gain function $\k_t$ is vector-valued (with dimension
$d\times m$) and it needs to be obtained for each fixed time $t$.  The
gain function is defined as a solution of a pde introduced in the
following subsection.  
For the linear Gaussian problem, $\k_t$ is the Kalman gain.   For the
general nonlinear non-Gaussian, the gain function needs to be
numerically approximated.  Algorithms for this are also summarized in
the following subsection.

\begin{remark}\label{rem:remStrato}
Given that the Stratonovich form provides a mathematical
interpretation of the (formal) ode model \cite[see Section 3.3 of the sde
textbook by {\O}ksendal]{oksendal2013}, we also obtain the (formal) ode
model of the filter. Denoting $Y_t \doteq \frac{\ud Z_t}{\ud t} $ and white noise
process $\dot{B}^i_t \doteq \frac{\ud B_t^i}{\ud t} $, the ODE model of the
filter is given by,
\begin{equation}
\frac{\ud X^i_t}{\ud t} = a(X^i_t) + \sigma(X_t^i) \dot{B}^i_t +  \k(X^i,t) \left( Y_t - \frac{1}{2}    (h(X^i_t) + \hat{h})\right),  \nonumber
\end{equation}
for $i=1,\ldots,N$. The feedback particle filter thus provides a generalization of the
Kalman filter to nonlinear systems, where the innovation error-based
feedback structure of the control is preserved (see
Fig.~\ref{fig:fig_FPF_KF}).  For the linear Gaussian case, the gain function
is the Kalman gain.  For the nonlinear case, the Kalman gain is
replaced by a nonlinear function of the state (See Fig.~\ref{fig:const-gain-approx}).
\end{remark}

\begin{figure*}
    \centering
 \includegraphics[scale=0.31]{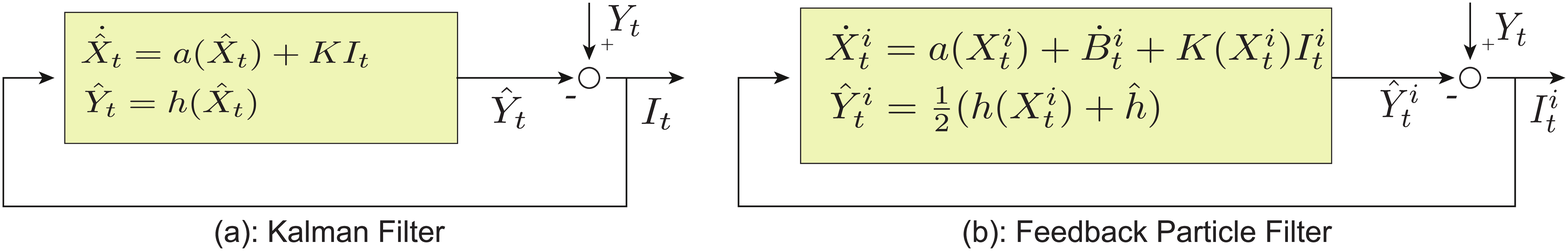}
   \vspace{-0.15in}
\caption{Innovation error-based feedback structure for the (a) Kalman
  filter and (b) nonlinear feedback particle filter. 
}\vspace{-.1cm}
    \label{fig:fig_FPF_KF}
\end{figure*}

\begin{remark}
It is shown in Appendix \ref{sec:theory} that, under the condition that the
gain function can be computed exactly, FPF is an exact algorithm.  That
is, if the initial condition $X^i_0$ is sampled from the prior $p_0^*$
then 
\[
\Prob [X_t \in A\mid \clZ_t ] = \Prob [X_t^i \in A\mid \clZ_t ], \quad \forall\;A\subset \mathbb{R}^d,\;\;t>0.
\]
In a numerical implementation, a finite number, $N$, of particles is simulated and 
$\Prob [X_t^i \in A\mid \clZ_t ] \approx \frac{1}{N}\sum_{i=1}^N \ind
[ X^i_t\in A]$ by the Law of Large Numbers (LLN). 

The considerations in the Appendix are described in a more general
setting, e.g., applicable to stochastic processes $X_t$ and $X_t^i$
evolving on manifolds.  This also explains why 
the update formula has a Stratonovich form.  For sdes on a manifold,
it is well known that the Stratonovich form is invariant to coordinate
transformations (i.e., intrinsic) while the Ito~form is not.  
A more in-depth discussion of the FPF for Lie groups appears
in~\cite{Chi_ACC16}.           
\end{remark}

Table~\ref{tab:symbols-FPF}
includes a list of symbols used for the FPF.

\begin{table}[t]
\centering
\begin{tabular}{|c|c|c|}
\hline
Variable & Notation & Model\\ \hline
Particle state & $X^i_t$ & FPF Eq.~\eqref{eqn:particle_filter_nonlin_intro} \\ \hline
Gain function  & $\K_t(x)=\nabla \phi(x)$ & Poisson Eq.~\eqref{eqn:EL_phi_intro} \\ \hline 
\multirow{4}{*}{Particle gain} & \multirow{4}{*}{ $\K^i=\K(X^i_t)$} &  Constant gain~\eqref{eq:const-gain-approx} \\ & & Galerkin~\eqref{eq:galerkin-Ki} \\ & & Kernel-based~\eqref{eq:kernelk}\\ & & Optimal coupling~\eqref{eqn:OT}   
\\\hline
\end{tabular}
\caption{Symbols for feedback particle filter}  
\label{tab:symbols-FPF}
\end{table}

\subsection{Gain function}

For pedagogical reasons primarily to do with notational convenience, the gain function
is defined here for the case of scalar-valued observation\footnote{The extension to
  multi-valued observation is straightforward and appears in~\cite{yang2016}.}.  In this case, the gain function $\k_t$ is defined in terms of the solution of the
weighted Poisson equation: 
\begin{equation}
\label{eqn:EL_phi_intro}
\begin{aligned}
-\nabla \cdot (\rho(x) \nabla \phi(x)) & = (h (x)-\hat{h}) \rho(x),\quad x\in \Re^d
\\
\int \phi (x) \rho(x) \ud x & = 0
\end{aligned}
\end{equation}
where  $\hat{h} :=
\int h(x)\rho(x)\ud x$, $\nabla$ and $\nabla \cdot $ denote the
gradient and the divergence operators, respectively, and at time $t$, 
$\rho(x)=p(x,t)$ denotes the density of $X_t^i$\footnote{Although this
  paper is limited to $\Re^d$, the proposed algorithm is applicable to
 nonlinear filtering problems on differential manifolds, e.g., matrix Lie
  groups (For an intrinsic form of the Poisson equation, see~\cite{Chi_ACC16}).  
  For domains with boundary, the pde is accompanied by a Neumann boundary condition:
\[
\nabla \phi(x) \cdot {n}(x) = 0
\] 
for all $x$ on the boundary of the domain where ${n}(x)$ is a unit
normal vector at the boundary point $x$.}. 
In terms of the solution $\phi(x)$ of~\eqref{eqn:EL_phi_intro}, 
the gain function at time $t$ is given by
\begin{align}
\k_t(x)= \nabla \phi(x)\,.
\label{eqn:gradient_gain_fn_intro}
\end{align}

\begin{remark}
The gain function $\k_t(x)$ is not uniquely defined through the
filtering problem.  Formula~\eqref{eqn:gradient_gain_fn_intro}
represents one choice of the gain function.  More generally, it is
sufficient to require that $\k_t=\k$ is a solution of
\begin{equation}
-\nabla \cdot (\rho(x) \k(x))  = (h (x)-\hat{h}) \rho(x),\quad x\in
\Re^d
\label{eqn:div_eqn}
\end{equation}
with $\rho(x)=p(x,t)$ at time
$t$.  

One justification for choosing
the gradient form solution, as in~\eqref{eqn:gradient_gain_fn_intro},
is its $L^2$ optimality. 
The general
solution of~\eqref{eqn:div_eqn} is given by
\[
\k = \nabla \phi + v
\]
where $\phi$ is the solution of~\eqref{eqn:EL_phi_intro} and $v$
solves $\nabla \cdot(\pr v)=0$.  It is easy to see that
\[
{\sf E}[|\k|^2]  = {\sf E}[|\nabla \phi|^2] + {\sf E}[|v|^2].
\]
Therefore, $\k=\nabla\phi$ is the minimum $L^2$-norm solution
of~\eqref{eqn:div_eqn}.  By interpreting the $L^2$ norm as the kinetic
energy, the gain function $\k_t = \nabla \phi$, defined through
(\ref{eqn:EL_phi_intro}), is seen to be optimal in the sense of
optimal transportation \cite{villani2003,evans}.

An alternative solution of~\eqref{eqn:div_eqn} is provided through the 
definition
\[
\k_t(x) = \frac{1}{\rho(x)}\nabla \tilde \phi(x)
\]
which leads to a standard Poisson equation in the unknown potential
$\tilde \phi$ for which the fundamental solution is explicitly
known. This fact is exploited in the interacting particle filter representations of \cite{crisan2007,crisan10}.
\label{remark:optimal-K}
\end{remark}

\medskip

There are two special cases of (\ref{eqn:EL_phi_intro}) -- summarized as part of the following two
examples -- where the exact solution can be found.

\begin{example} In the scalar case (where $d=1$), the Poisson equation is:
\begin{equation*}
-\frac{1}{\rho(x)}\frac{\ud }{\ud x}(\rho(x) \frac{\ud \phi}{\ud
  x}(x)) = h-\hah.
\end{equation*}
Integrating once yields the solution explicitly,
\begin{equation}
\begin{aligned}
\k(x) = \frac{\ud \phi}{\ud x}(x) &= -\frac{1}{\rho(x)}\int_{-\infty}^x
\rho(z)(h(z)-\hah)\ud z.
\end{aligned}
\label{eq:scalar}
\end{equation}

For the particular choice of $\rho$ as the sum of two Gaussians
${\cal N}(-1,\sigma^2)$ and ${\cal N}(+1,\sigma^2)$ with $\sigma^2=0.2$ and $h(x)=x$, the solution
obtained using~\eqref{eq:scalar} is depicted in \Fig{fig:truesol}.

\begin{figure}
\centering
\includegraphics[width=0.8\columnwidth]{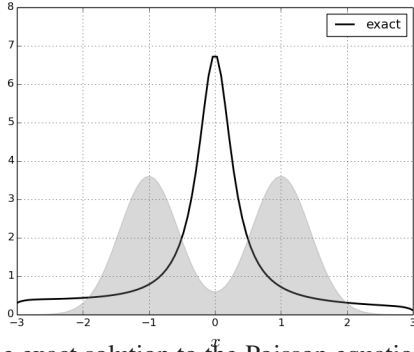}
\vspace*{-0.2in}
\caption{The exact solution to the Poisson equation using the
  formula~\eqref{eq:scalar}.  The density $\rho$ is the sum of two Gaussians
$N(-1,\sigma^2)$ and $N(+1,\sigma^2)$, and $h(x)=x$.  The density is depicted as the shaded curve in the background.}
\vspace*{-0.1in}
\label{fig:truesol}
\end{figure}
\label{ex:scalar}
\end{example}

\begin{example} Suppose the density $\rho$ is a
  Gaussian ${\cal N}(\mu,\Sigma)$.  The observation function $h(x) =
  Hx$, where $H \in \Re^{1\times d}$.  Then, $\phi = x^{\rm T}\Sigma H^{\rm T}
  $ and the gain function $\k = \Sigma \, H^{\rm T}$ is the Kalman
  gain.
\end{example}

\medskip

In the general non-Gaussian case, the solution is not known in an
explicit form and must be numerically approximated.  Note that even in
the two exact cases, one would need to numerically approximate the solution
because the density $\rho$ is not available in an explicit form.

The problem statement for numerical approximation is as follows:

\medskip

\noindent \textbf{Problem statement:} Given $N$ 
samples $\{X^1,\hdots,X^i,\hdots,X^N\}$ drawn i.i.d. from $\rho$, approximate
the vector-valued gain function$\{\K^1,\hdots,\K^i,\hdots,\K^N\}$, where
$\K^i:=\K(X^i)=\nabla\phi(X^i)$.  
The density $\rho$ is not explicitly known.  

\medskip

Four numerical
algorithms for approximation of the gain function appear in the
following four subsections\footnote{These algorithms are based on the
  existence-uniqueness theory for solution $\phi$ of the Poisson
  equation pde~\eqref{eqn:EL_phi_intro}, as described in~\cite{laugesen15}.}.  

\subsection{Constant Gain Approximation}

The constant gain approximation is the best -- in the least-square sense -- constant approximation of the
gain function (see Figure~\ref{fig:const-gain-approx}).
Mathematically, it is obtained by considering the following least-square
optimization problem:
\begin{equation*}
\kappa^\ast = \arg \min_{\kappa \in \Re^d} {\sf E} \, [|\k - \kappa|^2]
\end{equation*}
By using a standard sum of the squares argument, 
$
\kappa^\ast = {\sf E}[\k].
$
By multiplying both sides of the pde~\eqref{eqn:EL_phi_intro} by $x$
and integrating by parts, the expected value is computed
explicitly as
\begin{align*}
\kappa^* = \Expect[\k] &=\int_{\Re^d} (h(x)-\hah)\;
x \; \rho(x) \ud x
\end{align*}
The integral is evaluated empirically to obtain the following
approximate formula for the gain:
\begin{equation}
\k^i \equiv \frac{1}{N}\sum_{j=1}^N\; (h(X^j)-\hat{h}^{(N)}) \; X^j
\label{eq:const-gain-approx}
\end{equation}
where $\hat{h}^{(N)} = N^{-1} \sum_{j=1}^N h(X^j)$. 
The formula~\eqref{eq:const-gain-approx} is referred to as the {\em constant gain approximation}
of the gain function;
cf.,~\cite{yang2016}.  It is a popular choice in
applications~\cite{yang2016,stano2014,tilton2013,berntorp2015} and is equivalent to the approximation used in the deterministic and stochastic EnKBF~\cite{jdw:ReichCotter2015,jdw:LawStuartZygalakis2015,jdw:Reich2011,jdw:BergemannReich2012,jdw:deWiljesReichStannat2016}.

\begin{example}
Consider the linear case where $h(x) = Hx$.  The constant gain
approximation formula equals
\[
\k^i = \frac{1}{N}\sum_{j=1}^N\; (H X^j -H \hat{X}^{(N)}) \; X^i =
\Sigma^{(N)} H, 
\]
where $\hat{X}^{(N)}:=\frac{1}{N}\sum_{i=1}^NX^i$ and $\Sigma^N=\frac{1}{N}\sum_{i=1}^N(X^i-\hat{X}^{(N)})(X^i-\hat{X}^{(N)})^{\rm T}$ 
are the empirical mean and
the empirical variance, respectively.
That is, for the linear Gaussian case, the FPF algorithm with the
constant gain approximation gives the deterministic EnKF algorithm. 
\end{example}

\medskip

\begin{figure}[t]
\centering
\includegraphics[width=0.8\columnwidth]{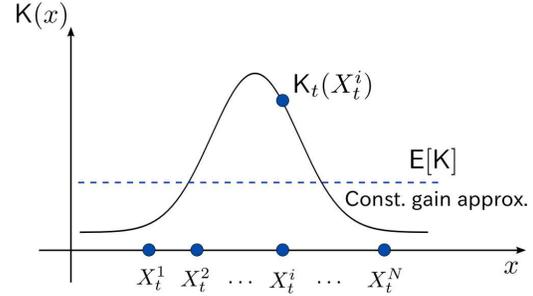}
\caption{Constant gain approximation in the feedback particle filter}
\label{fig:const-gain-approx}
\end{figure}

\subsection{Galerkin Approximation}
\label{sec:Galerkin}

The Galerkin approximation is a generalization of the constant gain
approximation where the gain function $\k=\nabla\phi$ is now
approximated in a finite-dimensional subspace
$S:=\text{span}\{\psi_1,\ldots,\psi_M\}$\footnote{$S$ is a
  finite-dimensional subspace in the Sobolev space
  $H^1_0(\Re^d;\rho)$ -- defined as the space of functions $f$ that
  are square-integrable with respect to density $\rho$ and whose
  (weak) derivatives are also square-integrable with respect to
  density $\rho$.  $H_0^1$ is the appropriate space for the solution
  $\phi$ of the Poisson equation~\eqref{eqn:EL_phi_intro}.}.
Mathematically, the Galerkin solution $\nabla\phiM$ is defined as the optimal
least-square approximation of $\nabla\phi$ in $S$, i.e, 
\begin{equation*}
\phiM =\argmin_{\psi \in S} ~{\sf E}[|\nabla \phi-\nabla \psi|^2]
\end{equation*}
The least-square solution is easily obtained by applying the projection theorem which gives
\begin{equation}
{\sf E}[\nabla \phiM \cdot \nabla \psi] = {\sf E}[(h-\hat{h})\,\psi],\quad
\forall \; \psi \in S
\label{eq:Poisson-weak-finite}
\end{equation}

By denoting $(\psi_1(x),\psi_2(x),\hdots,\psi_M(x))=:\psi(x)$ and
expressing $\phiM(x) = c \cdot \psi(x)$, the finite dimensional
system~\eqref{eq:Poisson-weak-finite} is expressed as 
a linear matrix equation
\begin{equation}
Ac=b
\label{eq:Acb}
\end{equation}
where $A$ is a $M\times M$ matrix and $b$ is a $M\times 1$ vector
whose entries are given by the respective formulae:
\begin{align*}
[A]_{lk} & = {\sf E}[\nabla\psi_l \cdot \nabla
\psi_k]\\
[b]_l & =  {\sf E}[(h-\hat{h})\,\psi_l]
\end{align*}

\begin{example} Two types of approximations follow from consideration
  of two types of basis functions:
\begin{romannum}
\item The constant gain approximation is obtained by taking basis
  functions as $\psi_l(x)=x_l$ for $l=1,\ldots,d$.  With this choice,
  $A$ is the identity matrix and the Galerkin gain function is a constant vector:
\begin{equation*}
\k (x)= \int x\, (h(x)-\hat{h})\, \pr(x)\ud x
\end{equation*}
It's empirical approximation is
\[
\k^i \equiv \frac{1}{N}\sum_{j=1}^N\; (h(X^j)-\hat{h}^{(N)}) \; X^j
\]

\item With a single basis function $\psi(x) = h(x)$, the Galerkin solution is
\[
\k(x) = \frac{\int (h(x) - \hat{h})^2 \rho(x) \ud x}{\int |\nabla
  h(x) |^2 \rho(x) \ud x} \; \nabla h(x)
\]
It's empirical approximation is obtained as
\[
\k^i =\frac{\sum_{j=1}^N (h(X_t^i) -
  \hat{h}^{(N)})^2}{\sum_{j=1}^N |\nabla h(X_t^i) |^2}\; \nabla h(X_t^i)
\] 
\end{romannum}
\label{ex:basis}
\end{example}
\begin{algorithm}[t]
 \caption{Constant gain approximation}
 \begin{algorithmic}[1]
     \REQUIRE $\{X^i\}_{i=1}^N$, $\{h(X^i)\}_{i=1}^N$, 
     \ENSURE $\{\K^i\}_{i=1}^N$, \medskip
      \STATE Calculate $\hat{h}^{(N)}=\frac{1}{N}\sum_{i=1}^N h(X^i)$\medskip
     \STATE Calculate $\K_c=\frac{1}{N}\sum_{i=1}^N\; (h(X^i)-\hat{h}^{(N)}) \; X^i$ \STATE $K^i=\K_c$ for $i=1,\ldots,N$  
 \end{algorithmic}
 \label{alg:const-gain}
\end{algorithm}
\medskip

\begin{algorithm}[t]
 \caption{Galerkin approximation of the gain function}
 \begin{algorithmic}[1]
     \REQUIRE $\{X^i\}_{i=1}^N$, $\{h(X^i)\}_{i=1}^N$, $\{\psi_1,\ldots,\psi_M\}$, 
     \ENSURE $\{\K^i\}_{i=1}^N$, \medskip
      \STATE$\hat{h}^{(N)}=\frac{1}{N}\sum_{i=1}^N h(X^i)$
     \STATE $[A^{(N)}]_{lk} :=\frac{1}{N}\sum_{i=1}^N \nabla \psi_l(X^i) \cdot \nabla \psi_k(X^i),$ for $l,k=1,\ldots,M$
     \STATE $[b^{(N)}]_k :=\frac{1}{N}\sum_{i=1}^N
     \psi_k(X^i)(h(X^i)-\hat{h}^{(N)})$, for $k=1,\ldots,M$ 
     \STATE Calculate $c^{(N)}$ by solving $A^{(N)} c^{(N)}= b^{(N)}$
     \STATE $\K^i=  \sum_{k=1}^M c^{(N)}_k \nabla \psi_k(X^i)$
 \end{algorithmic}
 \label{alg:galerkin}
\end{algorithm}

In practice,  the matrix $A$ and the vector $b$ are approximated
empirically, and the equation~\eqref{eq:Acb} solved to obtain the
empirical approximation of $c$, denoted as $c^{(N)}$ (see
Table~\ref{alg:galerkin} for the Galerkin algorithm).  In terms of
this empirical approximation, the gain function is approximated as
\begin{equation}
\K^i = \nabla\phi^{(M,N)}(X^i) := c^{(N)} \cdot \nabla \psi(X^i)
\label{eq:galerkin-Ki}
\end{equation}

The choice of basis function is problem dependent.  In Euclidean
settings, the linear basis functions are standard and lead to constant
gain approximation as discussed in  
Example~\ref{ex:basis}.  A straightforward extension is to choose
quadratic and higher order polynomials as basis functions. 
However, this approach does not scale well with the dimension of
problem: The number of basis functions $M$ scales at least
linearly with dimension, and the Galerkin algorithm involves inverting
a $M\times M$ matrix. 
This motivates nonparametric and data driven approaches where (a
small number of) basis functions can be selected in an adaptive fashion.  One such
algorithm, proposed in~\cite{berntorp2016}, is based on the Karhunen-Loeve
expansion.

In the next two subsections, alternate data-driven algorithms
are described.  One advantage of these algorithms is that they do not
require a selection of basis functions.

\subsection{Kernel-based Approximation}
\label{sec:kernel}

The linear operator $\frac{1}{\rho}\nabla \cdot (\rho
\nabla)=:\Delta_{\rho}$ for the pde~\eqref{eqn:EL_phi_intro} is a generator of a
Markov semigroup, denoted as $e^{\epsilon\Delta_\rho}$ for
$\epsilon>0$.  It follows that the solution $\phi$ of~\eqref{eqn:EL_phi_intro} is equivalently expressed as,
for any fixed $\epsilon>0$,
\begin{equation}
\phi = e^{\epsilon\Delta_\rho} \phi+ \int_0^\epsilon e^{s\Delta_\rho} (h-\hat{h}) \ud s.
\label{eq:fixed1_n}
\end{equation}

The fixed-point representation is useful because $e^{\epsilon
  \Delta_\rho}$ can be approximated by a finite-rank operator
\begin{equation*}
\TepsN f(x) :=\sum_{i=1}^N \kepsN(x,X^i)f(X^i),
\label{eq:TepsN}
\end{equation*}
where the kernel
\begin{equation*}
\kepsN (x,y) = \frac{1}{\nepsN(x)}\frac{\geps(x-y)}{\sqrt{\frac{1}{N}\sum_{i=1}^N \geps(x-X^i)}\sqrt{\frac{1}{N}\sum_{i=1}^N\geps(y-X^i)}}
\end{equation*}
is expressed in terms of the Gaussian kernel $\geps
(z):={(4\pi\epsilon)}^{-\frac{d}{2}}\exp{(-\frac{|z|^2}{4\epsilon})}$
for $z\in\Re^d$,
and $\nepsN(x)$ is a normalization factor chosen such that $\TepsN
1=1$. It is shown in \cite{coifman,hein2006} that
$e^{\epsilon\Delta\pr}\approx \TepsN$ as $\epsilon \downarrow 0$ and
$N \to \infty$.

The approximation of the fixed-point problem~\eqref{eq:fixed1_n} is
obtained as
\begin{align}
\phiepsN = \TepsN \phiepsN + \epsilon(h-\hat{h}),
\label{eq:fixed-point-epsN}
\end{align}
where $\int_0^\epsilon e^{s\Delta_\pr}(h-\hat{h})\ud s \approx
\epsilon(h-\hat{h})$ for small $\epsilon>0$.   The method of successive approximation is
used to solve the fixed-point equation for
$\phiepsN$.  In a recursive simulation, the algorithm is initialized with
the solution from the previous time-step.

The gain function is obtained by taking the gradient of
the two sides of~\eqref{eq:fixed-point-epsN}.  For this purpose, it is
useful to first define a finite-rank operator: 
\begin{equation}
\begin{aligned}
\nabla &\TepsN f(x)  := \sum_{i=1}^N\nabla \kepsN(x,X^i)f(X^i) \\
 & = \frac{1}{2\epsilon}\left[\sum_{i=1}^N \kepsN(x,X^i)f(X^i)\left(X^i - \sum_{j=1}^N\kepsN(x,X^j)X^j\right) \right]
 \label{eq:gradTeps}
\end{aligned}
\end{equation}
In terms of this operator, the gain function is approximated as
\begin{equation}
\K^i = \nabla \TepsN \phiepsN(X^i) +  \epsilon\nabla \TepsN(h-\hat{h}^{(N)})(X^i)
\label{eq:uepsN}
\end{equation}
where $\phiepsN$ on the righthand-side is the solution
of~\eqref{eq:fixed-point-epsN}. 

For $i,j\in\{1,2,\hdots,N\}$, denote
\[
a_{ij} := \frac{1}{2\epsilon}\kepsN(X^i,X^j)\left(r_j -\sum_{l=1}^N \kepsN(X^i,X^l)r_l\right)
\]
where $r_i:=\phiepsN(X^i) + \epsilon h(X^i) - \epsilon\hat{h}^{(N)}$.
Then, the formula~\eqref{eq:uepsN} is succinctly expressed as 
\begin{equation}
\label{eq:kernelk}
\K^i = \sum_{j=1}^N a_{ij}X^j
\end{equation}   

It is easy to verify that $\sum_{j=1}^N a_{ij} = 0$ and as
$\epsilon\rightarrow\infty$, $a_{ij} = {N}^{-1}(h(X^j) - \hat{h}^{(N)})$.
Therefore, as 
$\epsilon\rightarrow\infty$, $\K^i$ equals the constant gain
approximation formula~\eqref{eq:const-gain-approx}.     

\begin{algorithm}[h]
 \caption{Kernel-based approximation of the gain function}
 \begin{algorithmic}[1]
     \REQUIRE $\{X^i\}_{i=1}^N$, $\{h(X^i)\}_{i=1}^N$, $\Phi_{\text{prev}}$, $L$
     \ENSURE $\{\K^i\}_{i=1}^N$ \medskip
     \STATE Calculate $g_{ij}:=\exp(-|X^i-X^j|^2/4\epsilon)$ for $i,j=1$ to $N$\medskip
     \STATE Calculate $k_{ij}:=\frac{g_{ij}}{\sqrt{\sum_l g_{il}}\sqrt{\sum_l g_{jl}}}$ for $i,j=1$ to $N$
     \STATE Calculate $T_{ij}:=\frac{k_{ij}}{\sum_l k_{il}}$ for $i,j=1$ to $N$
     \STATE Calculate $\hat{h}^{(N)}=\frac{1}{N}\sum_{i=1}^N h(X^i)$\medskip
     \STATE Initialize $\Phi_i=\Phi_{\text{prev},i}$ for $i=1$ to $N$
     \medskip
     \FOR {$l=1$ to  $L$}
     \STATE Calculate $\Phi_i= \sum_{j=1}^N T_{ij} \Phi_j + \epsilon (h(X^i)-\hat{h}^{(N)})$\medskip
     \STATE Calculate $\Phi_i = \Phi_i - \frac{1}{N}\sum_{j=1}^N \Phi_j$
     \ENDFOR
     \STATE Calculate $r_i = \Phi_i + \epsilon(h(X^i)-\hat{h}^{(N)})$ 
     \STATE Calculate $a_{ij} = \frac{1}{2\epsilon}T_{ij} \left(r_j -\sum_{l=1}^N T_{il}r_l\right)$
     \STATE Calculate $\K^i = \sum_{j=1}^N a_{ij} X^j$
 \end{algorithmic}
 \label{alg:kernel}
\end{algorithm}


\subsection{Optimal Coupling-based Approximation}
\label{sec:optimal}
Optimal coupling-based approximation is another non-parametric
approach to directly approximate the gain function $\k^i$ from the ensemble
$\{X^i\}_{i=1}^N$.  The algorithm is presented here for the first time
in the context of FPF.

This approximation is based upon a continuous-time reformulation of
the recently developed ensemble transform for optimally transporting
(coupling) measures~\cite{jdw:ReichCotter2015}.  The relationship to
the gain function approximation is as follows: 
Define an $\epsilon$-parametrized family of densities by
$
\rho_{\epsilon}(x) := \rho(x) (1 + \epsilon (h(x) - \hat{h}(x)))
$ for $\epsilon >0$ sufficiently small and consider the optimal transport problem
\begin{equation}\label{eq:wass}
\begin{aligned}
\text{Objective:} & \quad \quad \min_{S_\epsilon} \; {\sf E}\,[|S_\epsilon(X)-X|^2]\\
\text{Constraints:} &\quad \quad X \sim \rho,\quad S_{\epsilon}(X)\sim \rho_{\epsilon} 
\end{aligned}
\end{equation}
The solution to this problem, denoted as $S_\epsilon$, is referred to
as the optimal transport map. It is shown in Appendix~\ref{sec:lp_justification} that
$\frac{\ud S_\epsilon}{\ud \epsilon} \big|_{\epsilon=0}= \K$.

The ensemble transform is a non-parametric algorithm to approximate
the solution $S_\epsilon$ of the optimal transportation problem given {\em only} $N$
samples $X^i$ drawn from $\rho$.  For the problem of gain function approximation, the
algorithm involves first solving the linear program:
\begin{equation}\label{eq:lp}
\begin{aligned}  
\text{Objective:} & \quad \quad \min_{\{t_{ij}\} } \quad
\sum_{i=1}^N \sum_{j=1}^N \; t_{ij} \,|X^i-X^j|^2 \\
\text{Constraints:} & \quad \quad \sum_{j=1}^N t_{ij} = \frac{1}{N}, \quad
\sum_{i=1}^N t_{ij} = \frac{1+ \epsilon (h(X^j) - \hat h^{(N)})}{N},\\
&\quad \quad \quad t_{ij} \ge 0.
\end{aligned}
\end{equation}
The solution, denoted as $t_{ij}^\ast$, is referred to as the optimal
coupling, where the coupling
constants $t_{ij}^\ast$ have the interpretation of the joint probabilities. 
The two equality constraints arise due to the specification of the two
marginals $\rho$ and $\rho_{\epsilon}$ in~\eqref{eq:wass} where it is
noted that the particles $X^i$ are sampled i.i.d. 
from $\rho$. The optimal value is an approximation
of the optimal value of the objective in~\eqref{eq:wass}.  The latter
is the celebrated Wasserstein distance between $\rho$ and
$\rho_{\epsilon}$. 

In terms of the optimal solution of the
linear program~\eqref{eq:lp}, an approximation to the gain function at $X^i$ 
is obtained as 
\begin{equation}\label{eqn:OT}
\K^i :=  \sum_{j=1}^N a_{ij} \,X^j, \qquad a_{ij} =  \frac{t_{ij}^\ast -\delta_{ij}}{\epsilon} \,,
\end{equation}
where $\delta_{ij}$ is the Dirac delta tensor ($\delta_{ij}=1$ if
$i=j$ and $0$ otherwise).   In practice, a finite $\epsilon>0$ is
appropriately chosen.  The approximation becomes exact as   
$\epsilon \downarrow 0$ and $N\to \infty$. 

The approximation~\eqref{eqn:OT} is structurally similar to the
constant gain approximation formula~\eqref{eq:const-gain-approx} and
also the kernel-gain approximation formula~\eqref{eq:kernelk}.  In
all three cases, the gain $\k^i$ at the $i^{\text{th}}$ particle is
approximated as a linear combination of the particle states
$\{X^j\}_{j=1}^N$.  Such approximations are computationally attractive whenever
$N \ll d$, i.e., when the dimension of state space is high but the dynamics is confined to a low-dimensional
subset which, however, is not a priori known.

\begin{algorithm}[t]
 \caption{Optimal Coupling approximation of the gain function}
 \begin{algorithmic}[1]
     \REQUIRE $\{X^i\}_{i=1}^N$, $\{h(X^i)\}_{i=1}^N$, $\epsilon$
     \ENSURE $\{\K^i\}_{i=1}^N$ \medskip
     \STATE Calculate $d_{ij}:=|X^i-X^j|^2$ for $i,j=1$ to $N$\medskip
     \STATE Calculate $\hat{h}^{(N)} = \frac{1}{N}\sum_{i=1}^N h(X^i)$
     \STATE Calculate $t_{ij}$ by solving the Linear program \eqref{eq:lp}.
     \STATE Calculate $a_{ij} = \frac{(t_{ij}-\delta_{ij})}{\epsilon}$  for $i,j=1$ to $N$
     \STATE Calculate $\K^i = \sum_{j=1}^N a_{ij} X^j$
 \end{algorithmic}
 \label{alg:OC}
\end{algorithm}

\section{Numerics}\label{sec:numerics}
This section contains results of numerical experiments where the
Algorithms~\ref{alg:const-gain}-\ref{alg:OC} are applied on the bimodal distribution problem
introduced in Example~\ref{ex:scalar}: The density $\rho$ is mixture
of two Gaussians $\mathcal{N}(-1,\sigma^2)$ and
$\mathcal{N}(+1,\sigma^2)$ with $\sigma^2=0.2$ and  and $h(x)=x$. The
exact solution is obtained using the explicit
formula~\eqref{eq:scalar} and is depicted in  \Fig{fig:truesol}. 

The following parameters are used in the numerical implementation of
the algorithms:
\begin{enumerate}
\item {\bf Galerkin:} Algorithm~\ref{alg:galerkin} with polynomial
  basis functions $\{x,x^2,\ldots,x^M\}$ for $M=1,3,5$. The case $M=1$
  gives the constant gain approximation (Algorithm~\ref{alg:const-gain}). 
\item {\bf Kernel:} Algorithm~\ref{alg:kernel} with $\epsilon=0.05,0.1,0.2$ and $L=1000$. 
\item {\bf Optimal coupling:} Algorithm~\ref{alg:OC} with $\epsilon=0.05,0.1,0.2$.  
\end{enumerate}

In the first numerical experiment, a fixed number of particles $N=100$
is drawn i.i.d from 
$\rho$. Figures~\ref{fig:gain-approx-bimodal}~(a)-(c)-(e) depicts the
approximate gain function obtained using the three algorithms.  For
the ease of comparison, the exact solution is also depicted.   

In the second numerical experiment, the empirical error is evaluated
as a function of the number of particles $N$.  For a single
simulation, the error is defined according to
\begin{equation}
\text{Error}:=\frac{1}{N}\sum_{i=1}^N | \K_{\text{alg}}(X^i) - \K_{\text{ex}}(X^i)|^2,
\label{eq:error}
\end{equation}
where $\{\X^i\}_{i=1}^N$ are the particles, $\K_{\text{alg}}$ is the
output of the algorithm and $\K_{\text{ex}}$ is the exact gain.  The
Monte-Carlo estimate of the error is evaluated by averaging over $1000$
simulations.  In each simulation, a new set of particles is sampled
which is used as an input consistently for the three
algorithms. Figure~\ref{fig:gain-approx-bimodal}~(b)-(d)-(f) depict
the Monte-Carlo estimate of the error as a function of the number of
particles.

In the third numerical experiment, the effect of varying the parameter
$\epsilon$ is investigated for the kernel-based and the optimal
coupling algorithms.  In this experiment, a fixed number of particles
$N=200$ is used.  Figure~\ref{fig:gain-approx-bimodal-eps} depict the
Monte-Carlo estimate of the error as a function of the parameter
$\epsilon$.

\begin{figure*}[t]
\centering
\begin{tabular}{cc}
\subfigure[Galerkin gain approxmiation for $N=100$]{
\includegraphics[width=0.8\columnwidth]{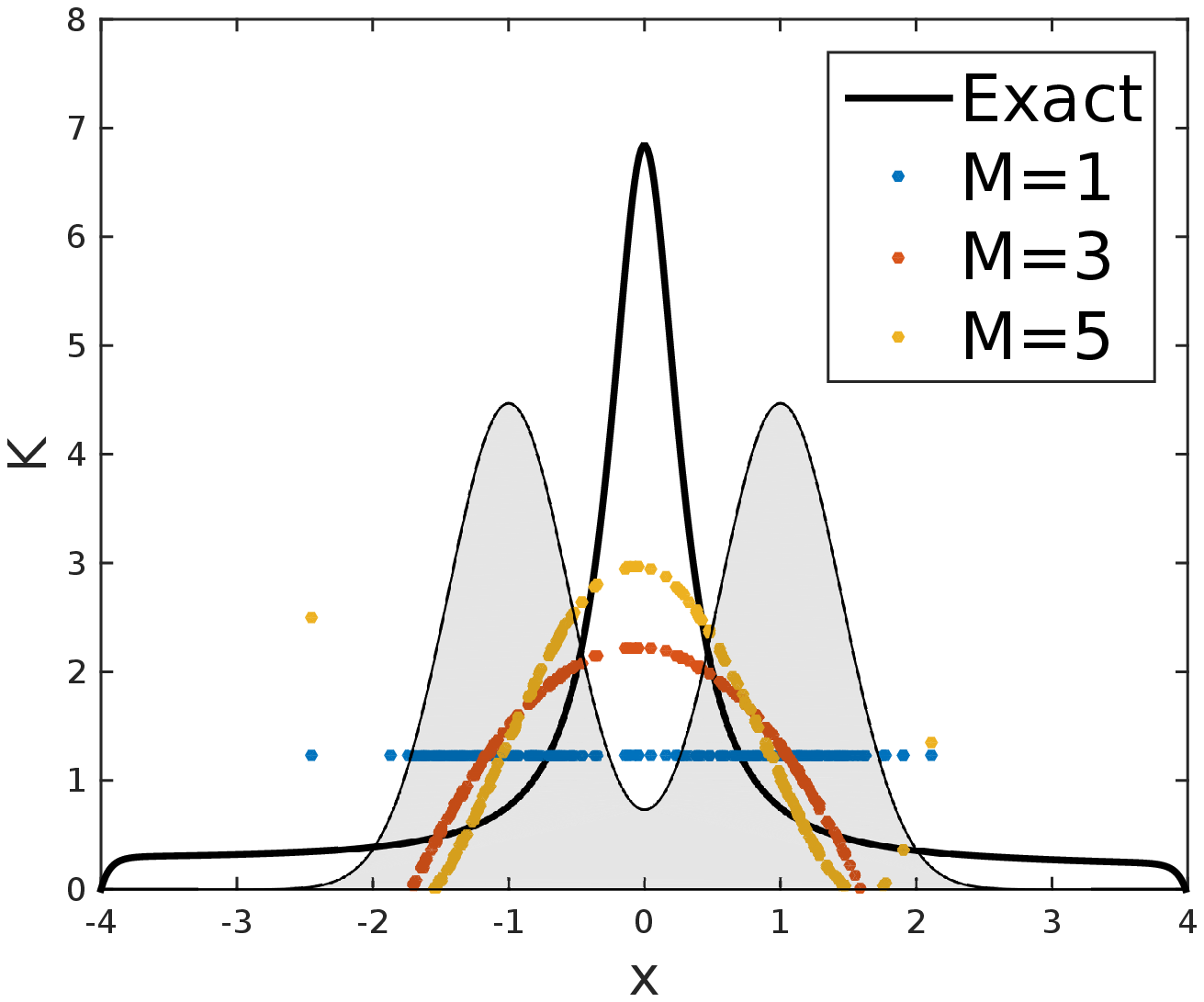}
} &
\subfigure[Galerkin gain approx. error as a function of $N$]{
\includegraphics[width=0.8\columnwidth]{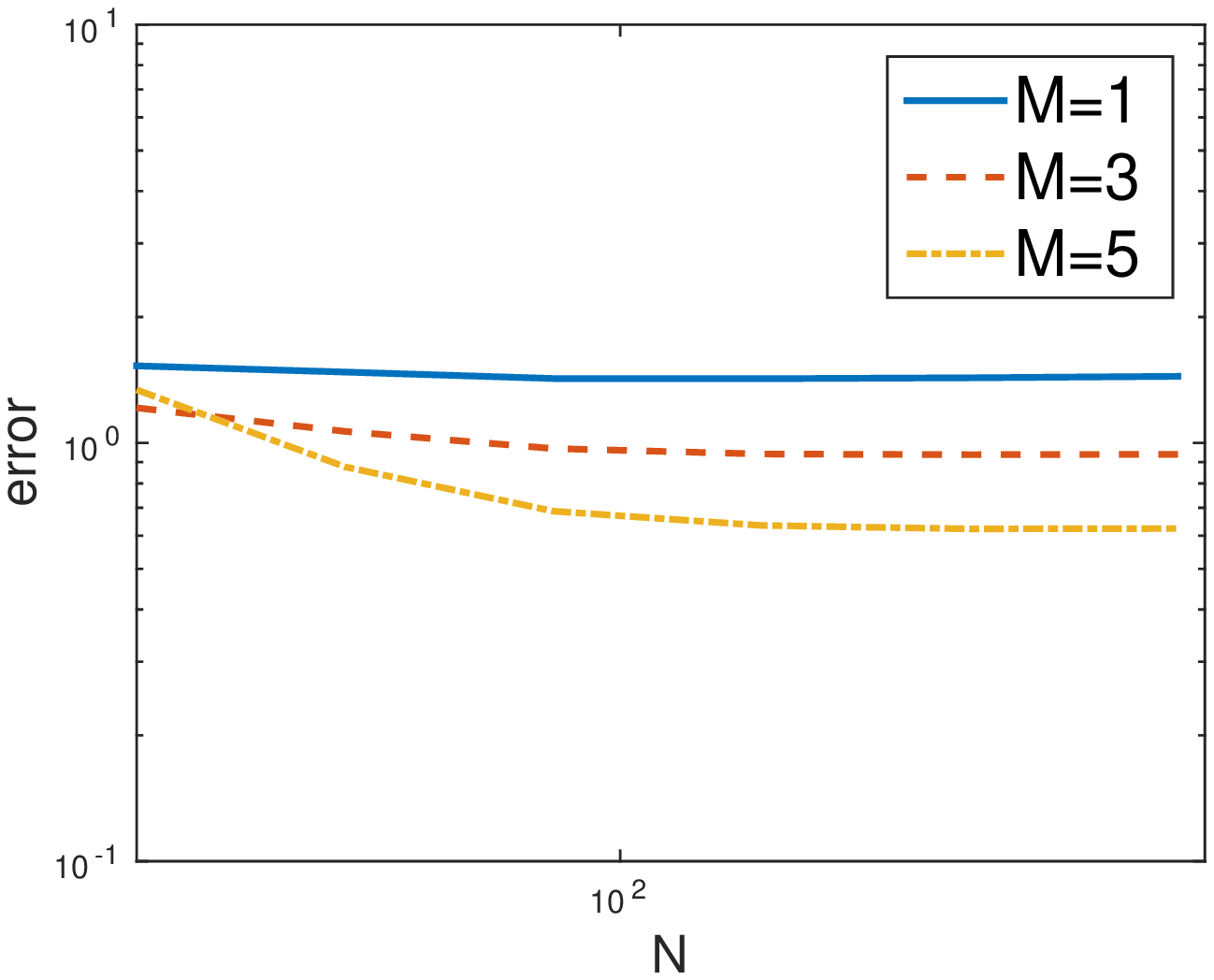}
} \\
\subfigure[Kernel-based gain approximation for $N=100$]{
\includegraphics[width=0.8\columnwidth]{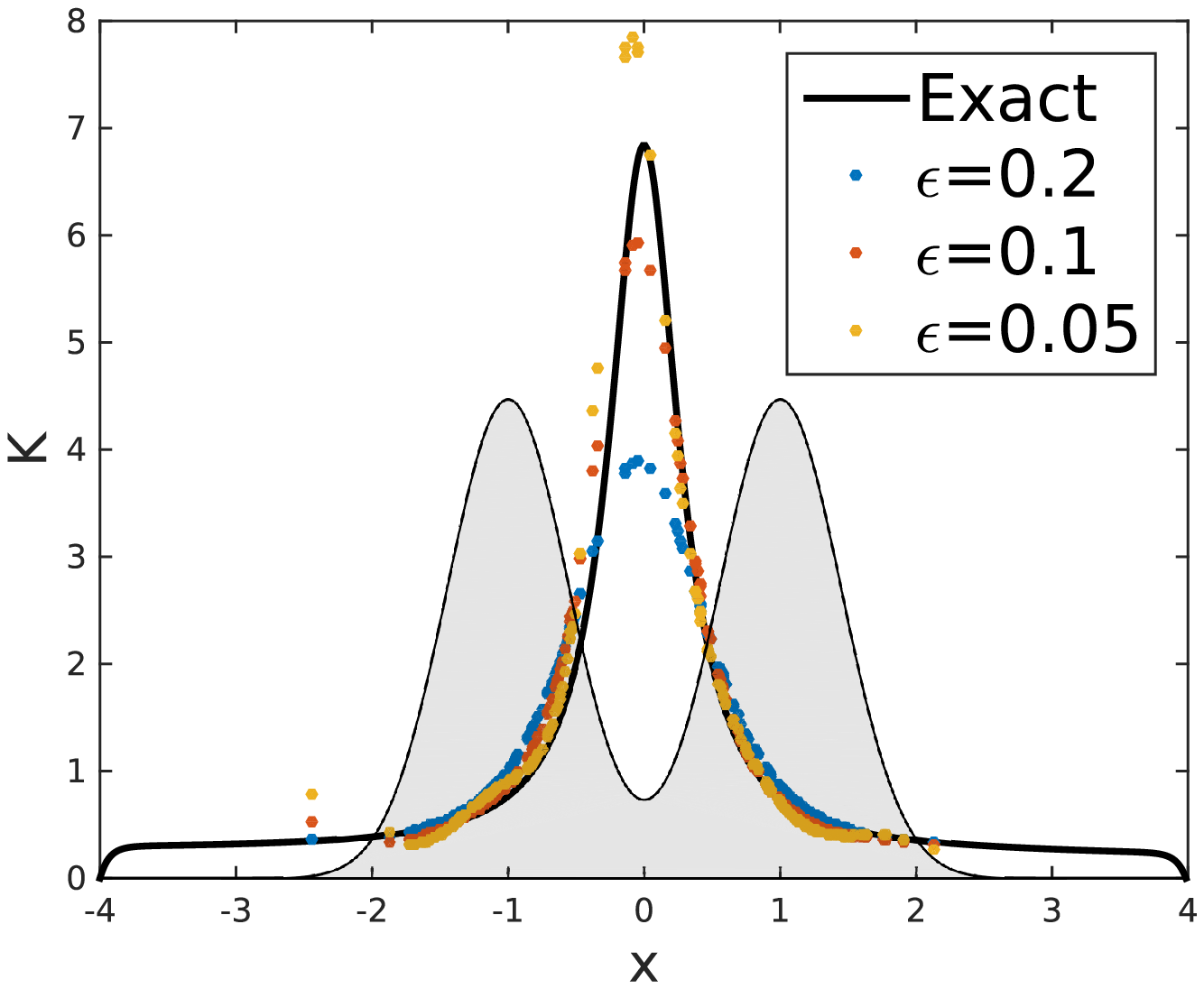}
} &
\subfigure[Kernel-based gain approx. error as a function of $N$]{
\includegraphics[width=0.8\columnwidth]{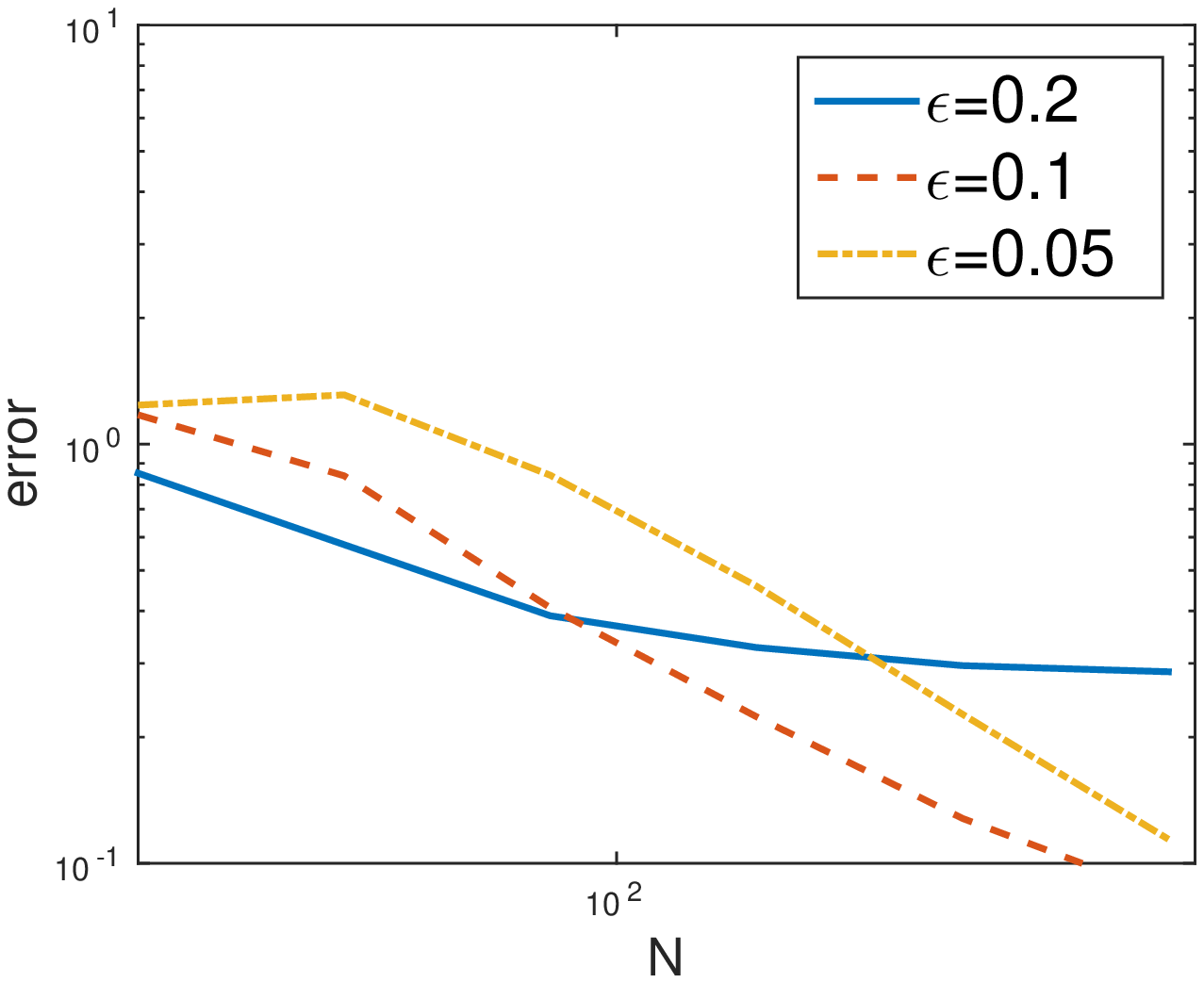}
} \\
\subfigure[Optimal Coupling gain approximation for $N=100$]{
\includegraphics[width=0.8\columnwidth]{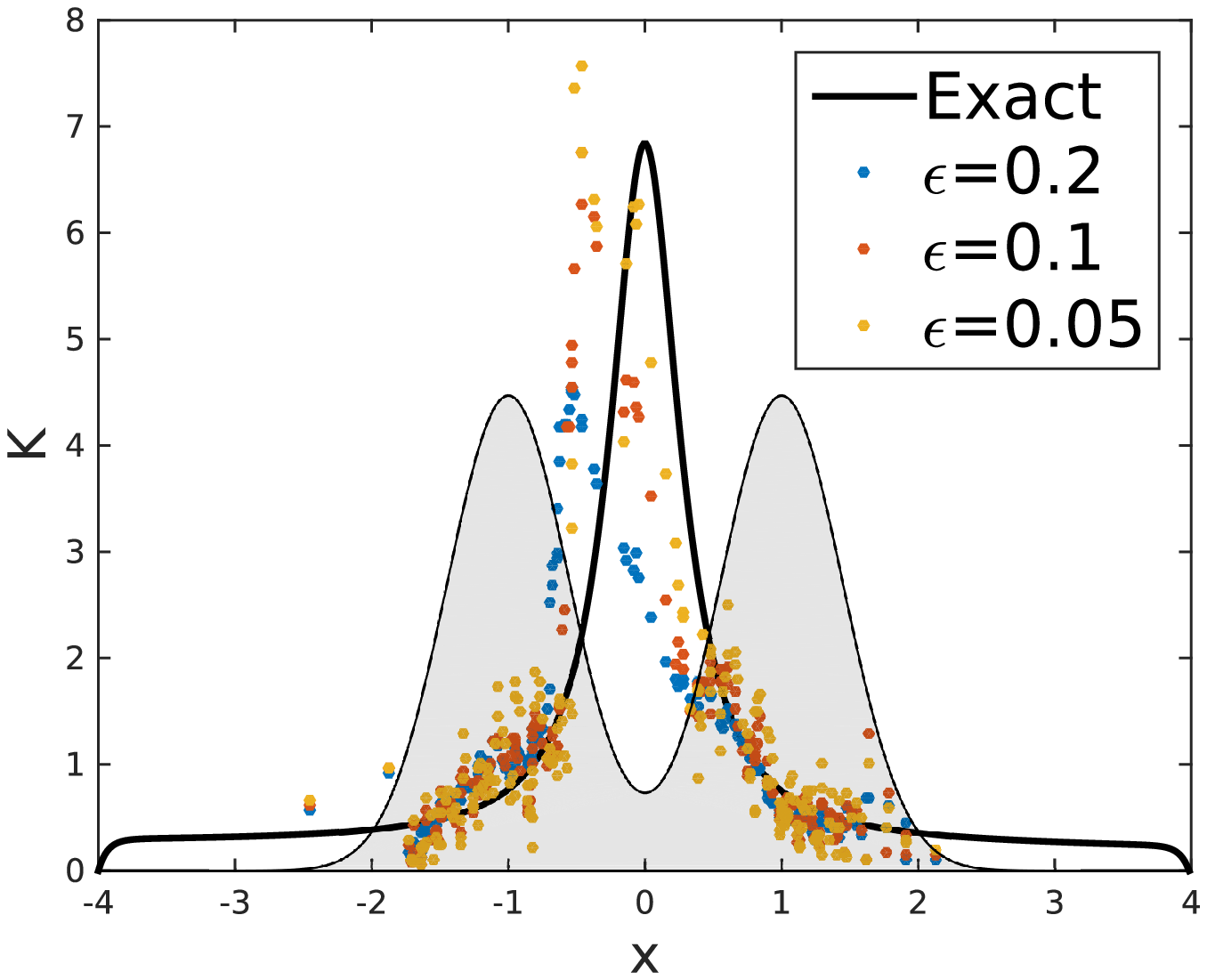}
} &
\subfigure[Optimal Coupling gain approx. error as a function of $N$]{
\includegraphics[width=0.8\columnwidth]{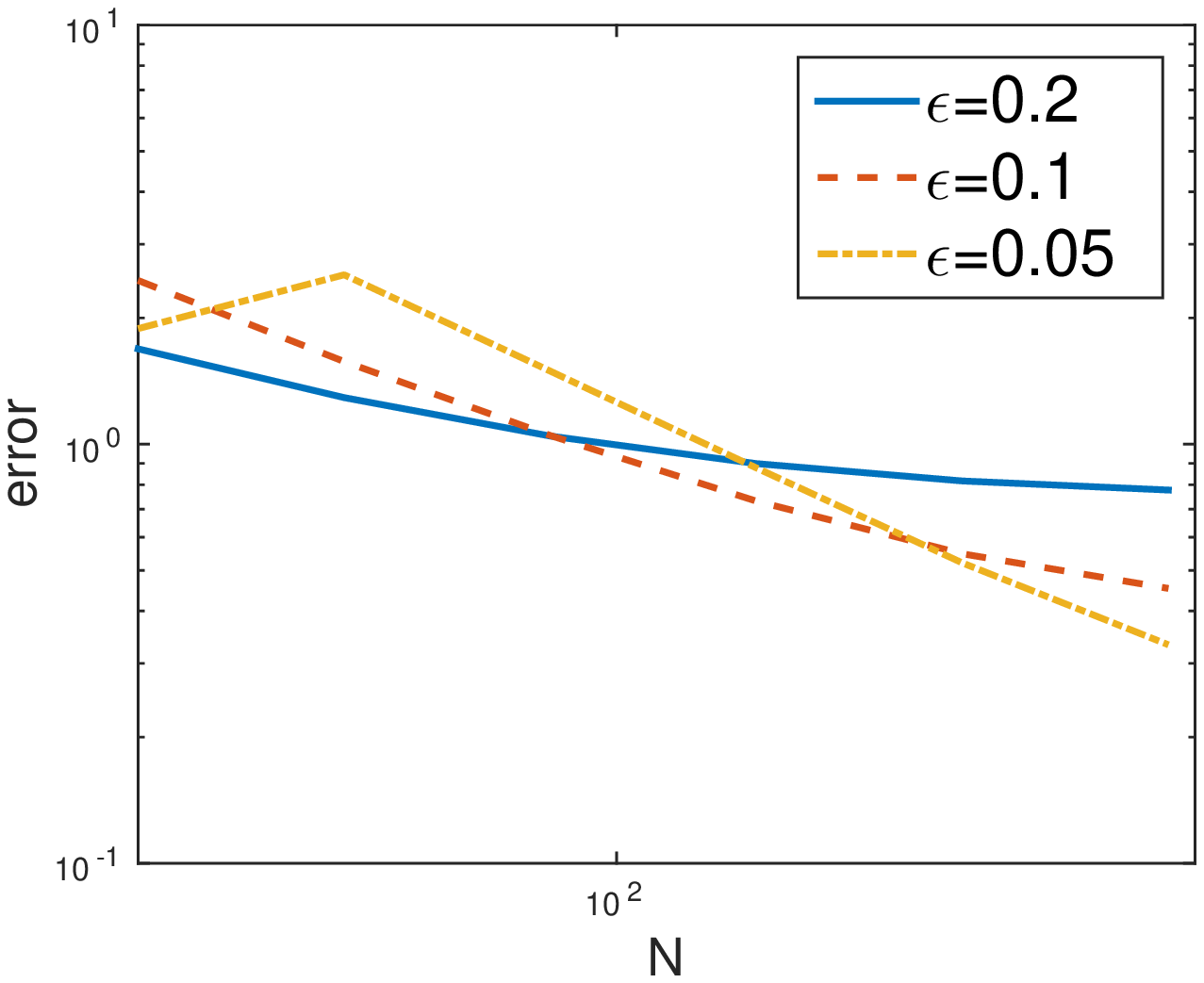}
} 
\end{tabular}
\caption{Comparison of the gain function approximations obtained using
  Galerkin (part~(a)), kernel (part~(c)), and the optimal coupling
  (part~(e)) algorithms.  The exact gain function is depicted as a
  solid line and the density $\rho$ is depicted as a shaded region
  in the background.  The parts~(b)-(d)-(e) depict the Monte-Carlo
  estimate of the empirical error~\eqref{eq:error} as a function of
  the number of particles $N$.  The Monte-Carlo estimate is obtained
  by averaging the empirical error over 100 simulations.}
\label{fig:gain-approx-bimodal}
\end{figure*}

The following observations are made based on the results of these
numerical experiments: 
\begin{enumerate}
\item (Figure \ref{fig:gain-approx-bimodal}~(a)-(b)) The accuracy of the Galerkin algorithm improves as the number of basis 
function increases.  For a fixed number of particles, the
matrix $A$ becomes poorly conditioned as the number of basis functions
becomes large. This can lead to numerical instabilities in solving the
matrix equation~\eqref{eq:Acb}.   

\item (Figure \ref{fig:gain-approx-bimodal}~(a)-(c)-(d)) The
  kernel-based and optimal coupling algorithms, preserve the
  positivity property of the exact gain. The positivity
  property of  the gain is not necesarily preserved in the Galerkin
  algorithm.  The correct sign of the gain is important in filtering
  applications as the gain determines the direction of drift of the
  particles.  A wrong sign can lead to divergence of the particle
  trajectories. 

\item (Figure \ref{fig:gain-approx-bimodal-eps}) For a fixed number of
  particles, there is an optimal value of $\epsilon$ that minimizes
  the error for the kernel-based and the optimal coupling
  algorithms. For the kernel-based algorithm, it
  is shown in~\cite{Amir_ACC17} that, for small $\epsilon$ and large
  $N$, the error scales as $O(\epsilon)
  + O(\frac{1}{\epsilon^{d/2+1}\sqrt{N}})$.  As
  $\epsilon\rightarrow\infty$, the approximate gain converges to the
  constant gain approximation.  In particular, the error remains
  bounded even for large values of $\epsilon$.

  \item The optimal coupling algorithm leads to spatially more irregular approximations which
  nevertheless converge as the number of particles increases. The optimal choice of
  the parameter $\epsilon$ depends on the particle size. Finally, a spatially more regular approximation
  could be obtained using kernel dressing, e.g., by convoluting the particle approximation with
  Gaussian kernels.  
\end{enumerate}

\begin{figure}[t]
\centering
\includegraphics[width=0.8\columnwidth]{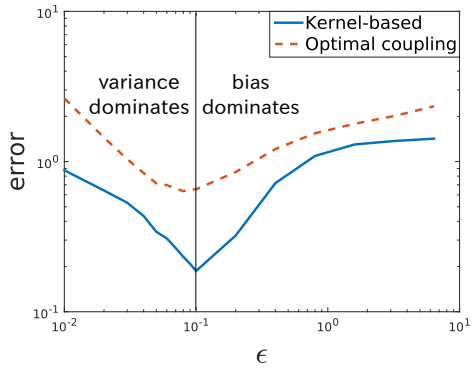}
\caption{Comparison of the Monte-Carlo estimate of the empirical
  error~\eqref{eq:error} as a function of the parameter $\epsilon$.  The number of particles $N=200$.}
\label{fig:gain-approx-bimodal-eps}
\end{figure}

\section{Conclusion}
\label{sec:conc}

We have summarized an interacting particle representation of the classic Kalman-Bucy filter and its extension
to nonlinear systems and non-Gaussian distributions under the general framework of FPFs. This framework is attractive
since it maintains key structural elements of the classic Kalman-Bucy filter, namely, a gain factor and the innovation.
In particular, the EnKF has become widely used in data assimilation for atmosphere-ocean dynamics and oil reservoir exploration. Robust extensions of
the EnKF to non-Gaussian distributions are urgently needed and FPFs provide a systematic approach for such extensions
in the spirit of Kalman's original work. However, interacting particle
representations come at a price; they require approximate solutions of
an elliptic PDE or a coupling problem, when viewed from a
probabilistic perspective. Hence, robust and efficient numerical
techniques for FPF-based interacting particle systems 
and study of their long-time behavior will be a primary focus of future research.

\section{Acknowledgement}

The first author AT was supported in part by the Computational Science
and Engineering (CSE) Fellowship at the University of Illinois at
Urbana-Champaign (UIUC).  
The research of AT and PGM at UIUC was
additionally supported by the National Science Foundation (NSF) grants 1334987 and 1462773.  This
support is gratefully acknowledged.
The research of JdW and SR has been partially funded by Deutsche
Forschungsgemeinschaft (DFG) through the grant CRC 1114 ``Scaling Cascades
in Complex Systems'', Project (A02) ``Multiscale data and asymptotic
model assimilation for atmospheric flows'' and the grant CRC 1294
``Data Assimilation'', Project (A02) ``Long-time stability and accuracy of
ensemble transform filter algorithms''.

\bibliographystyle{asmems4}
\bibliography{fpfbib}

\appendix
\section{Exactness of the Feedback Particle Filter}
\label{sec:theory}
The objective of this section is to describe the consistency result
for the feedback particle
algorithm~\eqref{eqn:particle_filter_nonlin_intro}, in the sense that
the posterior distribution of the particle exactly matches the
posterior distribution in the mean-field limit as $N \to \infty$.  To
put this in a mathematical framework, let $\pi^*_t$ denote the
conditional distribution of $X_t\in{\cal X}$ given history (filtration) of
observations $\mathcal{Z}_t:=\sigma\{Z_s,s\leq t\}$\footnote{The state
  space ${\cal X}$ is $\Re^d$ in this paper but the considerations of
  this section also apply when ${\cal X}$ is a differential
  manifold.}.  For a real-valued function $f:{\cal X}\to
\Re$ define the action of $\pi^*_t$ on $f$ according to:
\begin{equation*}
\pi^*_t(f):= \expect[f(X_t)|\clZ_t]
\end{equation*}
The time evolution of $\pi^*_t(f)$ is described by Kushner-Stratonovich pde (see Theorem 5.7 in \cite{xiong2008}),
\begin{equation}
\begin{aligned}
\pi^*_t(f) = &\pi^*_0(f) + \int_0^t\pi^*_s(\mathcal{L}f)\ud s \\&+\int_0^t \left(\pi^*_s(fh)-\pi^*_s(f)\pi^*_s(f)\right)\left(\ud Z_s - \pi_s^*(h)\ud s\right)
\end{aligned}
\label{eq:pi*t}
\end{equation}
where $f \in C^\infty_c({\cal X})$ (smooth functions with compact support), and 
\[\mathcal{L}f := a(x) \cdot \nabla f(x)  + \frac{1}{2}\sum_{k,l=1}^d \sigma_k(x)\sigma_l(x)\frac{\partial^2f}{\partial x_k \partial x_l}(x)\]

Next define $\pi_t$ to be the conditional distribution of $X^i_t\in
{\cal X}$ given $\clZ_t$.  It's action on real-valued function is
defined according to:
\begin{equation*}
\pi_t(f):= \expect[f(X^i_t)|\clZ_t]
\end{equation*}
The time evolution of $\pi_t(f)$ is described by the Fokker-Planck equation (see Proposition 1 in \cite{yang2016}):
\begin{equation}
\begin{aligned}
\pi_t(f) = &\pi_0(f) + \int_0^t\pi_s(\mathcal{L}f)\ud s + \int_0^t \pi_s(\k \cdot \nabla f)\ud Z_s \\&+ \int_0^t \pi_s(u \cdot \nabla f) \ud s+ \frac{1}{2}\int_0^t \pi_s(\k \cdot \nabla (\k \cdot \nabla f ))\ud s
\end{aligned}
\label{eq:pit}
\end{equation}
where $\k$ is the gain function and $u$ is the control function as follows:

\medskip

\def\grad{\text{grad}}
\newcommand{\lr}[2]{\langle #1, #2 \rangle}

\noindent {\em 1) Gain function:} 
Let $\phi \in H^1({\cal X};\pi_t)$\footnote{$H^1({\cal X};\rho)$ is
  the Sobolev space of functions on ${\cal X}$ that
  are square-integrable with respect to density $\rho$ and whose
  (weak) derivatives are also square-integrable with respect to
  density $\rho$.} be the solution of a (weak form of the) Poisson equation:
\begin{equation}
 \begin{aligned}
  & \pi_t \big( \nabla \phi \cdot \nabla \psi \big) = \pi_t \big( (h-\pi_t(h)) \psi \big)\\
  & \pi_t (\phi) = 0 ~~~~(\text{mean-zero})
 \end{aligned}
 \label{eq:BVP}
\end{equation}
for all $\psi\in H^1(\mathcal{X};\pi_t)$. The gain
function $\K(x, t) = \nabla \phi(x)$.

\medskip

\noindent {\em 2) Control function:} The function $
 u(x,t) = -\frac{1}{2}\, \K(x,t)\,\big(h(x)+\pi_t(h)\big)$.

The existence-uniqueness of the gain function $\k$ requires additional
assumptions on the distribution $\pi_t$ and the function $h$. 
\begin{romannum}
\item {\bf Assumption A1:} The probability distribution $\pi_t$ admits
  a spectral gap. That is , $\exists \,\lambda >0$ such that for all
  functions $f \in H^1_0({\cal X},\pi_t)$,
\begin{equation*}
\pi_t(|f|^2) \leq \frac{1}{\lambda}\pi_t(|\nabla f|^2)
\end{equation*}
for $t \in [0,T]$.
\item {\bf Assumption A2:} The function $h \in L^2({\cal X};\pi_t)$,
  the space of square-integrable functions with respect to $\pi_t$. 
\end{romannum}
Under the Assumptions (A1) and (A2) the Poisson equation~\eqref{eq:BVP} has a unique solution $\phi \in H^1_0(\mathcal{X},\pi_t)$ and the resulting control and gain function will be admissible~\cite{yang2016}. 
The consistency of the feedback particle filter is stated in the following Theorem. 
\begin{theorem}
Let $\pi^*_t$ and $\pi_t$ satisfy the forward equations~\eqref{eq:pi*t} and~\eqref{eq:pit}, respectively. Then assuming $\pi^*_0=\pi_0$, we have:
\begin{equation}
\pi^*_t(f)=\pi_t(f)
\end{equation}
for all $t \in [0,T]$ and all functions $f \in C^\infty_c(\Re^d)$.
\end{theorem}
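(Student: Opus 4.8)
The plan is to show that $\pi_t$ itself solves the Kushner--Stratonovich equation~\eqref{eq:pi*t}, and then to invoke uniqueness of solutions of that equation (cf.~\cite{xiong2008}) together with the matching initial data $\pi_0=\pi^*_0$ to conclude $\pi_t=\pi^*_t$ on $[0,T]$. Comparing~\eqref{eq:pit} with~\eqref{eq:pi*t}, the terms $\pi_0(f)$ and $\int_0^t\pi_s(\mathcal{L}f)\ud s$ already coincide, so it suffices to verify that, with $\pi_t$ in place of $\pi^*_t$, the coefficient of $\ud Z_s$ and the remaining $\ud s$ drift in~\eqref{eq:pit} reproduce those in~\eqref{eq:pi*t}. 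Concretely I would establish the two identities
\begin{equation}
\pi_t(\k\cdot\nabla f)=\pi_t(fh)-\pi_t(f)\pi_t(h)
\label{plan:id1}
\end{equation}
and
\begin{equation}
\pi_t(u\cdot\nabla f)+\tfrac{1}{2}\pi_t\!\big(\k\cdot\nabla(\k\cdot\nabla f)\big)=-\big(\pi_t(fh)-\pi_t(f)\pi_t(h)\big)\pi_t(h).
\label{plan:id2}
\end{equation}

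Identity~\eqref{plan:id1} follows immediately from the weak Poisson equation~\eqref{eq:BVP}: since $\k=\nabla\phi$, taking the test function $\psi=f$ gives $\pi_t(\nabla\phi\cdot\nabla f)=\pi_t\big((h-\pi_t(h))f\big)$, which is exactly the conditional covariance $\pi_t(fh)-\pi_t(f)\pi_t(h)$. Thus the innovation gain appearing in~\eqref{eq:pit} is precisely the Kushner--Stratonovich innovation coefficient.

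For~\eqref{plan:id2} I would treat the Wong--Zakai correction term by applying~\eqref{eq:BVP} a second time, now with the test function $\psi=\k\cdot\nabla f=\nabla\phi\cdot\nabla f$; this yields $\pi_t\big(\k\cdot\nabla(\k\cdot\nabla f)\big)=\pi_t\big(h\,\k\cdot\nabla f\big)-\pi_t(h)\,\pi_t(\k\cdot\nabla f)$. Substituting the control $u=-\tfrac{1}{2}\k\,(h+\pi_t(h))$ into $\pi_t(u\cdot\nabla f)$ and expanding produces a term $-\tfrac{1}{2}\pi_t(h\,\k\cdot\nabla f)$ that exactly cancels the $+\tfrac{1}{2}\pi_t(h\,\k\cdot\nabla f)$ coming from the correction term. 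What survives is $-\pi_t(h)\,\pi_t(\k\cdot\nabla f)$, and one final use of~\eqref{plan:id1} turns this into the right-hand side of~\eqref{plan:id2}. Together, identities~\eqref{plan:id1} and~\eqref{plan:id2} show that~\eqref{eq:pit} is, term by term, the Kushner--Stratonovich equation~\eqref{eq:pi*t} written for $\pi_t$.

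I expect the main obstacle to lie not in this algebra but in the closing uniqueness argument and the accompanying regularity bookkeeping. First, invoking uniqueness for the nonlinear Kushner--Stratonovich equation is a genuine functional-analytic input that must be justified under the standing assumptions (A1)--(A2); a self-contained alternative would be a Gr\"onwall estimate on $\pi_t(f)-\pi^*_t(f)$, which is complicated by the quadratic dependence of the coefficients on the (a priori distinct) measures. Second, the second application of~\eqref{eq:BVP} requires the test function $\k\cdot\nabla f=\nabla\phi\cdot\nabla f$ to be admissible, i.e.\ to belong to $H^1(\mathcal{X};\pi_t)$; this is where the elliptic regularity of $\phi$ afforded by the spectral-gap assumption (A1) and $h\in L^2(\mathcal{X};\pi_t)$ (A2) is essential, and for a general $\pi_t$ it is the most delicate point to make fully rigorous.
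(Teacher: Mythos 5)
Your proposal is correct and follows essentially the same route as the paper's proof: both reduce the claim to the same two identities and verify them via the weak Poisson equation~\eqref{eq:BVP} with the test functions $\psi=f$ and $\psi=\k\cdot\nabla f$, with identical algebraic cancellations. The only difference is that you make explicit the final appeal to uniqueness of solutions of the Kushner--Stratonovich equation (and the admissibility of $\k\cdot\nabla f$ as a test function), which the paper leaves implicit.
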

\begin{proof}
Using~\eqref{eq:pi*t} and~\eqref{eq:pit} it is sufficient to show the following two identities:
\begin{align*}
  \pi_s(\k \cdot \nabla f) &=\pi_s(fh) - \pi_s(f)\pi_s(h)\\
  \pi_s(u\cdot \nabla f) + \frac{1}{2}\pi_s(\k \cdot \nabla (\k \cdot \nabla f ))&= -\left(\pi_s(fh) - \pi_s(f)\pi_s(h)\right)\pi_s(h) 
\end{align*} 
The first identity is obtained by using $\k=\nabla \phi$ and the weak form of the Poisson equation~\eqref{eq:BVP} with $\psi=f$. The second identity is obtained similarly. Use the expression for the control function $u$ to obtain,
\begin{align*}
\pi_s(u \cdot \nabla f)&= - \pi_s(\frac{h+\pi_s(h)}{2}\k \cdot \nabla f)\\
&=-\frac{1}{2}\pi_s((h-\pi_s(h))\k \cdot \nabla f) - \pi_s(h)\pi_s(\k \cdot \nabla f) \\
&= -\frac{1}{2}(\k \cdot \nabla (\k \cdot \nabla f))- \pi_s(h) \pi_s((h-\pi_s(h))f)
\end{align*} 
where in the last step the weak form of the Poisson equation~\eqref{eq:BVP} is used for the $\psi=\k \cdot \nabla f$ and $\psi=f$. This concludes the second identity and hence the Theorem. 
\end{proof}

\section{Relationship of gain function to the optimal transport map} 
\label{sec:lp_justification}

Let $S_{\epsilon}$ be the optimal transport map, solution of the problem~\eqref{eq:wass}. 
It is known that this map is of a gradient form~\cite{villani2003}. In the following we furthermore assume that the map $S_\epsilon=:\nabla \Phi_\epsilon$ is $C^1$ in the parameter $\epsilon$.  
For
any test function $\psi$
\[
C_\epsilon=\int \psi(\nabla \Phi_\epsilon(x))\pr(x)\ud x - \int \psi(x)\pr_\epsilon(x)\ud x
\equiv 0
\]
Therefore, 
\[
\left. \frac{\ud C_{\epsilon}}{\ud \epsilon} \right|_{\epsilon=0} =
\int \nabla \phi (x) \cdot \nabla \psi(x)\pr(x)\ud x - \int (h(x)-\hat{h}) \psi(x) \pr(x)\ud x = 0
\]
where $\phi := \frac{\ud \Phi_\epsilon}{\ud \epsilon}\big|_{\epsilon=0}$. This is the weak form of the Poisson equation~\eqref{eqn:EL_phi_intro}. 
Therefore, 
\begin{equation*}
\K = \nabla \phi = \frac{\ud S_\epsilon}{\ud \epsilon}\bigg|_{\epsilon=0}
\end{equation*}

\end{document}